\newtheorem{theorem}{Theorem}[section]
\newtheorem{lemma}[theorem]{Lemma}
\newtheorem{proposition}[theorem]{Proposition}
\newtheorem{corollary}[theorem]{Corollary}
\title[Asymptotic Expansion in the half-space with cavity]
	{Asymptotic Expansion for Harmonic Functions\\ in the Half-Space with a Pressurized Cavity}
\author{A. Aspri, E. Beretta, C. Mascia }
\begin{document}
\baselineskip=16pt

%\maketitle
%\tableofcontents
%\thispagestyle{empty}

\begin{center}\sf
{\Large Asymptotic Expansion for Harmonic Functions}\vskip.2cm
{\Large in the Half-Space with a Pressurized Cavity}\vskip.25cm
{\tt \today}\vskip.2cm
Andrea ASPRI\footnote{Dipartimento di Matematica ``G. Castelnuovo'',
	Sapienza -- Universit\`a di Roma, P.le Aldo Moro, 2 - 00185 Roma (ITALY), \texttt{\tiny aspri@mat.uniroma1.it}},
Elena BERETTA\footnote{Dipartimento di Matematica,
	Politecnico di Milano, Via Edoardo Bonardi - 20133 Milano (ITALY), \texttt{\tiny elena.beretta@polimi.it}},
Corrado MASCIA\footnote{Dipartimento di Matematica ``G. Castelnuovo'',
	Sapienza -- Universit\`a di Roma, P.le Aldo Moro, 2 - 00185 Roma (ITALY), \texttt{\tiny mascia@mat.uniroma1.it}
  {\sc and} {Istituto per le Applicazioni del Calcolo, Consiglio Nazionale delle Ricerche
  	(associated in the framework of the program ``Intracellular Signalling'')}}
\end{center}
\vskip.5cm

\begin{quote}\footnotesize\baselineskip 12pt 
{\sf Abstract.} 
In this paper, we address a simplified version of a problem arising from volcanology. 
Specifically, as reduced form of the boundary value problem for the Lam\'e system,
we consider a Neumann problem for harmonic functions in the half-space with a cavity $C$.
Zero normal derivative is assumed at the boundary of the half-space; differently, at $\partial C$,
the normal derivative of the function is required to be given by an external datum $g$, corresponding
to a pressure term exerted on the medium at $\partial C$.
Under the assumption that the (pressurized) cavity is small with respect to the distance from the boundary of the
half-space, we establish an asymptotic formula for the solution of the problem.
Main ingredients are integral equation formulations of the harmonic solution of the Neumann problem and
a spectral analysis of the integral operators involved in the problem.
In the special case of a datum $g$ which describes a constant pressure at $\partial C$, we recover a
simplified representation based on a polarization tensor.
\vskip.15cm

{\sf Keywords.} Asymptotic expansions; harmonic functions in the half-space; single and double layer potentials.
%integral equations. \commentA{Andrea: potrebbero andare bene?}\comment{Elena: aggiungerei single and double layer potentials}
\vskip.15cm

{\sf 2010 AMS subject classifications.}
35C20  (31B10, 35J25)
% 31B10 Potential theory; Higher-dimensional theory; Integral representations, integral operators, integral equations methods
% 31B20 Potential theory; Higher-dimensional theory; Boundary value and inverse problems
% 35C15 	Partial differential equations; Representations of solutions; Integral representations of solutions
% 35C20 Partial differential equations; Representations of solutions; Asymptotic expansions
% 35J25 Partial differential equations; Elliptic equations and systems; Boundary value problems for second-order elliptic equations
% 35R30 	Partial differential equations; Miscellaneous topics; Inverse problems
\end{quote}

\section{Introduction}
The aim of the paper is to provide a detailed mathematical study of a simplified version of a problem arising
from volcanology.
The analysis can be considered as a blueprint, useful to address the original problem in a forthcoming research;
at the same time, the result has an interest on its own, entering in the stream of the asymptotic analysis for the conductivity equation, see \cite{Ammari1,Ammari-Griesmaier,Ammari-Kang,Ammari-Kang1,Friedman-Vogelius} and references therein, with the principal novelties of dealing with a case of an unbounded domain with unbounded boundary
and of a different choice of boundary datum (homogeneous on the boundary of the half-space and
non-homogeneous on the boundary of the cavity).

The geological problem is the detection of geometrical and physical features of magmatic reservoirs from changes within calderas.
The starting evidence is that the magma exerts a force aside the surrounding crust when migrates toward the earth's surface, producing appreciable horizontal and vertical ground displacements, which can be detected by a variety of modern
techniques (see the detailed description provided in \cite{Segall}).
As stated in \cite{Battaglia4D}, \textit{the main questions that emerge when monitoring volcanoes are how to constrain
the source of unrest}, that is to estimate the parameters related to its depth, dimension, volume and pressure,
\textit{and how to better asses hazards associated with the unrest}. 
For this purpose, the measurements of crust deformations are a useful tool to study magmatic processes since they
are sensitive to changes in the source pressure and volume. 

From a modelling point of view, the displacements are described using the theory of linear elasticity, by replacing
caldera with a half-space having a stress-free flat boundary, and the magma chamber by a cavity subjected to an
internal pressure; in more detail, let $v$ be the displacement vector
$v(x)=\left(v_1(x),v_2(x),v_3(x)\right)$ and
$\widehat{\nabla}v=\bigl(\nabla v+\left(\nabla v\right)^T\bigr)/2$ the strain tensor,
then the elastic model is defined by the linear system of equations
\begin{equation}\label{elastic model}
	\textrm{div}(\mathbb{C}\widehat{\nabla}v)=0\quad \textrm{in}\ \mathbb{R}^3_-\setminus C,
\end{equation}
with boundary conditions
\begin{equation}\label{elastic model boundary}
	\begin{aligned}
	&(\mathbb{C}\widehat{\nabla}v)n \cdot n=g\quad \textrm{on}\ \partial C
		&\qquad &(\mathbb{C}\widehat{\nabla}v)n \cdot \tau=0\quad \textrm{on}\ \partial C\\
	&(\mathbb{C}\widehat{\nabla}v)n=0\quad \text{on}\ \mathbb{R}^2	
		&\qquad & v(x)\rightarrow 0\quad\textrm{as}\ |x|\rightarrow +\infty,
	\end{aligned}
\end{equation}
where $C$ is the pressurized cavity, $g$ is a vector-valued function represents the pressure, $n$ the unit outer normal vector and $\tau$ the tangential one;
$\mathbb{C}$ is the isotropic elasticity tensor with Lam\'e parameters $\lambda$ and $\mu$ defined as 
\begin{equation*}
	\mathbb{C}:=\lambda I_3\otimes I_3+2\mu\mathbb{I}_{\textrm{sym}},
\end{equation*} 
with $I_3$ the identity matrix of order $3$ and $\mathbb{I}_{\textrm{sym}}$ the fourth order tensor such
that $\mathbb{I}_{\textrm{sym}} A=\widehat{A}$.
The goal is to determine $g$ and geometrical features of $C$ (position, volume...) from surface measurements
of the displacement field $v$. 

In applications, to handle more easily the inverse problem related to \eqref{elastic model}--\eqref{elastic model boundary}, the function $g$ is taken constant and equal to a vector $p\in\mathbb{R}^3$. Additionally, from a geological point of view, sometimes it is  reasonable to consider the magma chamber small compared to the distance from the boundary of the half-space, see \cite{Battaglia4D,McTigue,Segall}; by adding these hypotheses and fixing the geometry of the cavity, some efforts have been done during the last decades to find some explicit or approximate solutions to the mathematical model. The simplest approximate solution obtained by asymptotic expansions is due to McTigue when the cavity is a sphere, see \cite{McTigue}. The other few solutions available concern ellipsoidal shapes, dike and faults, see \cite{Battaglia4D,Segall}.   

With the ultimate goal to study in detail the elastic problem \eqref{elastic model}--\eqref{elastic model boundary}, establishing an asymptotic expansion in the presence of a  pressurized cavity of generic shape, in this paper we analyse a simplified scalar version of this model so as to shed light and mark the path to treat the elastic case.
Denoting by $\mathbb{R}^d_-$ the half-space and $\mathbb{R}^{d-1}$ its boundary,
we consider the Laplace equation 
\begin{equation*}
	\Delta u=0\quad \text{in}\ \mathbb{R}^d_-\setminus C_{\varepsilon}
\end{equation*}
with boundary conditions
\begin{equation*}
	\frac{\partial u}{\partial n}=g\quad \text{on}\ \partial C_{\varepsilon},\qquad
	\frac{\partial u}{\partial x_d}=0\quad \text{on}\ \mathbb{R}^{d-1},\qquad
	u(x)\rightarrow 0\quad \text{as}\ |x|\rightarrow +\infty
\end{equation*}
where $C_{\varepsilon}$ is the analogous of the pressurized cavity in the elastic case, with $\varepsilon$ a small parameter controlling its size, $g$ is a function defined on $\partial C_{\varepsilon}$ and $d\geq 3$.
Obviously, the choice to focus the attention on dimensions greater than two comes from the application we have in mind.   

\subsection*{Presentation of the main result}
The main goals of this paper are first to analyse the well-posedness of the scalar problem, find a representation formula of the solution and then determine an asymptotic expansion with respect to the parameter $\varepsilon$ of the solution. For the first two steps we do not need to assume the cavity  to be small.

It is worth noticing that in terms of well-posedness the case of the half-space and, in general, of unbounded domain with unbounded boundary, is more difficult to treat compared to bounded or exterior domains since both the control of the solution decay and integrability on the boundary are needed. Indeed, it is typical to treat these problems by means of weighted Sobolev spaces, see for example \cite{Amrouche-Bonzom}. 
In our case, we choose to use the particular symmetry of the half-space to prove the well-posedness in order to mantain a simple mathematical interpretation of the results. 
Therefore, we bring the problem back to an exterior domain in the whole space for which there is a vast literature on the well-posedness, see \cite{Folland}.

To trackle the issue of the asymptotic expansion, we consider the approach developed by Ammari and Kang based on single and double layer potentials for harmonic functions, see for example \cite{Ammari-Kang,Ammari-Kang1}. This is the reason why we search an integral representation formula of the solution. To do this, we take advantage of the explicit expression of the Neumann function for the half-space
\begin{equation*}
	N(x,y)=\varGamma(x-y)+\varGamma(\widetilde{x}-y),
\end{equation*}
where $\varGamma$ is the fundamental solution of the Laplacian and $\widetilde{x}$ is the symmetric point of $x$ with respect to the $x_d$-plane, in order to get a representation formula containing only integral contributions on the boundary of $C$. In detail, we find that
\begin{equation*}
	u(x)=\int_{\partial C}\left[N(x,y)g(y)-\frac{\partial}{\partial n_y}N(x,y)f(y)\right]d\sigma(y),
			\quad x\in\mathbb{R}_-^d\setminus C,
\end{equation*}
where $f$ is the trace of the solution $u$ on $\partial C$.
From the point of view of the inverse problem we are interested in evaluating the solution $u$ on the boundary of the half-space; since $\varGamma(x-y)=\varGamma(\widetilde{x}-y)$, for $x\in\mathbb{R}^{d-1}$, the integral formula becomes
\begin{equation*}
	\frac{1}{2}u(x)=\int_{\partial C}\left[\varGamma(x-y)g(y)-\frac{\partial}{\partial n_y}\varGamma(x-y)f(y)\right]d\sigma(y),
			\quad x\in\mathbb{R}^{d-1}.
\end{equation*}
Taking $B$ a bounded Lipschitz domain containing the origin and $z\in\mathbb{R}^d_-$ we consider $C_{\varepsilon}:=C=z+\varepsilon B$ with the assumption that $\textrm{dist}(z,\mathbb{R}^{d-1})\geq \delta_0>0$.
Therefore, defining $\widehat{g}(\zeta;\varepsilon)=g(z+\varepsilon \zeta)$, with $\zeta\in B$, and $S_B\widehat{g}$ the single layer potential, the main result holds

\begin{theorem} 
For any $\varepsilon>0$, let $g\in L^2(\partial C_\varepsilon)$ such that $\widehat{g}$ is independent on $\varepsilon$.
Then, at any $x\in \mathbb{R}^{d-1}$, we have
\begin{equation*}
	\begin{aligned}
	u_{\varepsilon}(x)
	&=2\varepsilon^{d-1}\varGamma(x-z)\int_{\partial B}\widehat{g}(\zeta)d\sigma(\zeta)\\
	&+2\varepsilon^d\nabla\varGamma(x-z)\cdot\int_{\partial B}\left\{n_{\zeta}\left(\tfrac{1}{2}I+K_B\right)^{-1}S_B\widehat{g}(\zeta)
		-\zeta\widehat{g}(\zeta)\right\}d\sigma(\zeta)+O(\varepsilon^{d+1})
	\end{aligned}
\end{equation*}
where $O(\varepsilon^{d+1})$ denotes a quantity uniformly bounded by $C\varepsilon^{d+1}$ with $C=C(\delta_0)$ which tends to infinity when $\delta_0$ goes to zero. 
\end{theorem}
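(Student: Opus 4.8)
The plan is to bring the computation of the unknown trace $f:=u_\varepsilon|_{\partial C_\varepsilon}$ back to a second--kind integral equation on $\partial C_\varepsilon$, to rescale the cavity to $B$, and then to insert the result into the boundary representation formula and collect powers of $\varepsilon$ through Taylor expansions of $\varGamma$. For \emph{Step~1}, I would take the trace on $\partial C_\varepsilon$ of the half--space representation formula for $u_\varepsilon$ and use the jump relations for the double layer potential (the reflected kernel $\partial_{n_y}\varGamma(\widetilde x-y)$ being smooth near $\partial C_\varepsilon$ since $\textrm{dist}(z,\mathbb{R}^{d-1})\geq\delta_0$), obtaining that $f$ solves
\begin{equation*}
  \Bigl(\tfrac12 I+K_{C_\varepsilon}\Bigr)f+\widetilde{\mathcal D}_{C_\varepsilon}f
   =\int_{\partial C_\varepsilon}N(\cdot,y)\,g(y)\,d\sigma(y)\qquad\text{on }\partial C_\varepsilon ,
\end{equation*}
where $\widetilde{\mathcal D}_{C_\varepsilon}$ is the double layer potential built on the reflected fundamental solution. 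Changing variables $y=z+\varepsilon\zeta$, $\zeta\in\partial B$, setting $\phi(\zeta):=f(z+\varepsilon\zeta)$, and using that $\varGamma$ is positively homogeneous of degree $2-d$, that $\nabla\varGamma$ is homogeneous of degree $1-d$, that $d\sigma(y)=\varepsilon^{d-1}d\sigma(\zeta)$, and that $K_{C_\varepsilon}$ is invariant under translations and dilations, the equation becomes
\begin{equation*}
  \Bigl(\tfrac12 I+K_B+\varepsilon^{d-1}T_\varepsilon\Bigr)\phi=\varepsilon\,S_B\widehat g+\varepsilon^{d-1}\rho_\varepsilon ,
\end{equation*}
where $T_\varepsilon$ is a family of operators and $\rho_\varepsilon$ a family of functions, uniformly bounded in $L^2(\partial B)$ with bounds depending only on $\delta_0$ (they arise from the reflected contributions, whose arguments stay at distance $\gtrsim\delta_0$ from the origin).

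\emph{Step~2 (inversion).} Since the $L^2(\partial B)$--spectrum of $K_B$ is contained in $(-\tfrac12,\tfrac12]$, the operator $\tfrac12 I+K_B$ is invertible on $L^2(\partial B)$; a Neumann series then yields, for $\varepsilon$ small, the invertibility of $\tfrac12 I+K_B+\varepsilon^{d-1}T_\varepsilon$ with inverse $(\tfrac12 I+K_B)^{-1}+O(\varepsilon^{d-1})$. Hence, in $L^2(\partial B)$,
\begin{equation*}
  \phi=\varepsilon\,\psi+O(\varepsilon^{d-1}) ,\qquad \psi:=\Bigl(\tfrac12 I+K_B\Bigr)^{-1}S_B\widehat g ,
\end{equation*}
and in particular $\|\phi\|_{L^2(\partial B)}\leq C(\delta_0)\,\varepsilon$, while $\phi=\varepsilon\psi+O(\varepsilon^2)$ because $d\geq 3$.

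\emph{Step~3 (expansion of $u_\varepsilon$ on $\mathbb{R}^{d-1}$).} Starting from
\begin{equation*}
  \tfrac12 u_\varepsilon(x)=\int_{\partial C_\varepsilon}\varGamma(x-y)\,g(y)\,d\sigma(y)
     -\int_{\partial C_\varepsilon}\frac{\partial}{\partial n_y}\varGamma(x-y)\,f(y)\,d\sigma(y),\qquad x\in\mathbb{R}^{d-1},
\end{equation*}
I would rescale as above and Taylor expand, for $|x-z|\geq\delta_0$,
\begin{equation*}
  \varGamma(x-z-\varepsilon\zeta)=\varGamma(x-z)-\varepsilon\,\nabla\varGamma(x-z)\cdot\zeta+O(\varepsilon^2),\qquad
  \nabla\varGamma(x-z-\varepsilon\zeta)=\nabla\varGamma(x-z)+O(\varepsilon),
\end{equation*}
the remainders being controlled by negative powers of $\delta_0$. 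The single layer term then contributes $\varepsilon^{d-1}\varGamma(x-z)\int_{\partial B}\widehat g\,d\sigma-\varepsilon^{d}\nabla\varGamma(x-z)\cdot\int_{\partial B}\zeta\,\widehat g(\zeta)\,d\sigma(\zeta)+O(\varepsilon^{d+1})$, while for the double layer term one uses $\partial_{n_y}\varGamma(x-y)=-n_\zeta\cdot\nabla\varGamma(x-z-\varepsilon\zeta)$ together with $\phi=\varepsilon\psi+O(\varepsilon^2)$ to get $-\int_{\partial C_\varepsilon}\partial_{n_y}\varGamma(x-y)f\,d\sigma=\varepsilon^{d}\nabla\varGamma(x-z)\cdot\int_{\partial B}n_\zeta\,\psi(\zeta)\,d\sigma(\zeta)+O(\varepsilon^{d+1})$. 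Summing the two contributions, multiplying by $2$, and recalling $\psi=(\tfrac12 I+K_B)^{-1}S_B\widehat g$ gives exactly the stated formula.

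\emph{Main obstacle.} The Taylor bookkeeping is routine; the delicate points are (i) the uniform--in--$\varepsilon$ mapping bounds needed to run the Neumann series for $\tfrac12 I+K_B+\varepsilon^{d-1}T_\varepsilon$ and to obtain the a priori estimate $\|\phi\|_{L^2(\partial B)}\lesssim\varepsilon$, and (ii) checking that all neglected terms are genuinely $O(\varepsilon^{d+1})$: this is precisely where $d\geq3$ is used, since then $\varepsilon^{d-1}\leq\varepsilon^2$ and the $O(\varepsilon^{d-1})$ correction in $\phi$ only perturbs the expansion at order $\varepsilon^{d+1}$. One must also keep explicit track of the dependence on $\delta_0$, which enters through the negative powers of $\delta_0$ in the Taylor remainders and in the reflected--point kernels, and which is what makes the constant $C=C(\delta_0)$ blow up as $\delta_0\to0$.
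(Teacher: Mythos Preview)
Your proposal is correct and follows the same overall architecture as the paper: rescale the trace equation to $\partial B$, invert via a Neumann series to obtain $\phi=\varepsilon\,(\tfrac12 I+K_B)^{-1}S_B\widehat g+O(\varepsilon^{d-1})$, then insert this into the boundary representation $\tfrac12 u_\varepsilon=S_\varepsilon g-D_\varepsilon f_\varepsilon$ and Taylor expand $\varGamma$ and $\nabla\varGamma$ about $x-z$.

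The one noteworthy difference is in how the Neumann series is organised. The paper first invokes its Section~3 spectral result $\sigma(K_{C_\varepsilon}+\widetilde D_{C_\varepsilon})\subset(-\tfrac12,\tfrac12]$ to write $(\tfrac12 I+K_\varepsilon+\widetilde D_\varepsilon)^{-1}=\sum_{h\geq 0}(\tfrac12 I-K_\varepsilon-\widetilde D_\varepsilon)^h$, and only afterwards expands each power as $(\tfrac12 I-K_B)^h-\varepsilon^{d-1}E_{h,\varepsilon}$, summing the correction series separately. You instead rescale first, land directly on $(\tfrac12 I+K_B+\varepsilon^{d-1}T_\varepsilon)$, and invert this as a small perturbation of the invertible operator $\tfrac12 I+K_B$. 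Your route is slightly more elementary: it only needs the classical fact $\sigma(K_B)\subset(-\tfrac12,\tfrac12]$ on the fixed domain $B$, and does not require the paper's extended spectral analysis of $K_C+\widetilde D_C$. The paper's route, on the other hand, makes the invertibility of $\tfrac12 I+K_\varepsilon+\widetilde D_\varepsilon$ available for \emph{all} $\varepsilon$, not just small ones, which is what they use earlier to write $f_\varepsilon=(\tfrac12 I+K_\varepsilon+\widetilde D_\varepsilon)^{-1}(S_\varepsilon g+\widetilde S_\varepsilon g)$ as an exact identity before expanding. Either way the expansion of $\phi$ and the final Taylor bookkeeping are identical.
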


Finally, with the asymptotic expansion in hand, we consider the Neumann boundary datum $g=-p\cdot n$ where $p$ is a constant vector in $\mathbb{R}^d$. This particular choice has a double purpose: to reconnect this problem with the constant boundary conditions of the elastic model and to make more explicit the integrals in the asymptotic formula.
The result we get contains the same polarization tensor obtained by Friedman and Vogelius in \cite{Friedman-Vogelius} for cavities in a bounded domain. 

The organization of the paper is the following. Section \ref{section direct problem} is divided into three parts: in the first one we recall some well-known results about harmonic functions and layer potentials; in the second one we examine the well-posedness of the scalar problem; in the third one we get the representation formula of the solution. In Section \ref{section spectral analysis}, we state and prove a spectral result crucial for the derivation of our main asymptotic expansion. In Section \ref{section asymptotic expansion} we present  and prove the theorem on the asymptotic expansion and finally we illustrate the result for the particular choice $g=-p\cdot n$.

{\small
\subsection*{Notation}
All the analysis is performed in $\mathbb{R}^d$, with $d\geq 3$; $B_r(x)\subset \mathbb{R}^d$ denotes the
ball with centre $x$ and radius $r>0$, and $\omega_d$ the area of the $(d-1)$-dimensional unit sphere. 
The scalar product between two vectors is represented by $x\cdot y$ and $n_x$ indicates the unit
outward normal vector in $x$ on the boundary of some specified domain.
The fundamental solution $\Gamma$ of the Laplace operator in $\mathbb{R}^d$, with $d\geq 3$,
is given by $\Gamma(x)=\kappa_d|x|^{2-d}$ with $\kappa_d:=1/\omega_d(2-d)$.
}

{\small
Points $x\in\mathbb{R}^d$, $d \geq 3$, are decomposed as $x=(x',x_d)$ where $x'=(x_1,\cdots,x_{d-1})$.
We denote by $\mathbb{R}^d_-$ the half-space $\{x\in\mathbb{R}^d:\ x_d<0\}$ and by $\mathbb{R}^{d-1}$ its boundary.
Given a point $x\in\mathbb{R}^d_-$, its reflected point $(x',-x_d)$ with respect to the plane $x_d=0$ is represented by $\widetilde{x}$. 
%The cavity $C\subset \mathbb{R}^d_-$ is assumed to be open, bounded with Lipschitz boundary.
%Later on the cavity will have the form $C_{\varepsilon}=z+\varepsilon B$ for some $z\in\mathbb{R}^d_-$,
%$B$ an open set containing the origin and $\varepsilon$ a small parameter controlling the size of $C_{\varepsilon}$.
}

\section{The direct problem}\label{section direct problem}

In this Section, we analyse the boundary value problem 
\begin{equation}\label{direct problem}
	\begin{cases}\vspace{0.2cm}
	\Delta u=0 & \text{in}\ \mathbb{R}^d_-\setminus C\\ \vspace{0.2cm}
	\displaystyle\frac{\partial u}{\partial n}=g& \text{on}\ \partial C\\  \vspace{0.2cm}
	\displaystyle\frac{\partial u}{\partial x_d}=0 & \text{on}\ \mathbb{R}^{d-1}\\
	u\rightarrow 0 & \text{as}\ |x|\rightarrow +\infty
	\end{cases}
\end{equation}
where $C$ is the cavity, with a twofold aim: to establish well-posedness of the problem and to provide a representation formula.

\subsection*{Preliminaries}
The specific symmetry of the half-space permits to show well-posedness by extending the problem
to an exterior domain, viz. the complementary set of a bounded set.
Hence, it is useful to recall the classical results on the asymptotic behaviour of harmonic functions in
exterior domains.
Given a bounded domain $\varOmega\subset \mathbb{R}^d$ with $d\geq 3$, if $v$ is harmonic
in $\mathbb{R}^d\setminus \varOmega$ then $v$ is harmonic at infinity if and only if 
$v$ tends to $0$ as $|x|\rightarrow \infty$.
Moreover, there exist $r_0, M>0$, such that if $|x|\geq r_0$, the following estimates hold
\begin{equation}\label{asympotic behaviour}
	\left| v(x)\right|\leq M|x|^{2-d},\qquad
	\left|v_{x_i}(x)\right|\leq M|x|^{1-d}\qquad
	\left|v_{x_jx_k}(x)\right|\leq M|x|^{-d}
\end{equation}
for $i,j,k=1,\dots,d$.
The proof can be found in \cite{Folland}
(see also \cite{Evans, Kress}). 

The representation formula makes use of layer potentials
whose definitions we now recall; see \cite{Ammari-Kang, Folland, Kress}.
Given a bounded Lipschitz domain $\varOmega\subset\mathbb{R}^d$ 
and a function $\varphi\in L^2(\partial \varOmega)$, we introduce the integral operators
\begin{equation}\label{single and double layer definition}
	\begin{aligned}
	S_{\varOmega}\varphi(x)&:=\int_{\partial\varOmega}\varGamma(x-y)\varphi(y)d\sigma(y),
		\qquad & x\in\mathbb{R}^d,\\
	D_{\varOmega}\varphi(x)&:=\int_{\partial\varOmega}\frac{\partial}{\partial n_y}\varGamma(x-y)\varphi(y)d\sigma(y),
		\qquad & x\in	\mathbb{R}^d\setminus \partial\varOmega,\\
	\end{aligned}
\end{equation}
which are called, respectively, \textit{single and double layer potential} relative to the set $\varOmega$.
By definition, $S_{\varOmega}\varphi$ and $D_{\varOmega}\varphi$ are  well-defined and harmonic
in $\mathbb{R}^d\setminus \partial\varOmega$.
Further, still for $d\geq 3$, we have
\begin{equation*}
	S_{\varOmega}\varphi=O(|x|^{2-d})
	\qquad \textrm{and}\qquad
	D_{\varOmega}\varphi=O(|x|^{1-d})
\end{equation*}
as $|x|\rightarrow +\infty$.
In addition, if $\varphi$ has zero mean on $\partial\varOmega$, the decay rate of the single layer potential
$S_{\varOmega}$ is increased, precisely,
\begin{equation*}
	\textrm{if}\quad\int_{\partial\varOmega}\varphi(x)d\sigma(x)=0,
	\quad\textrm{then}\quad
	S_{\varOmega}\varphi=O(|x|^{1-d})
\end{equation*}
as $|x|\rightarrow +\infty$ (such property holds for $d\geq2$).

Next, we introduce the compact operator $K_{\varOmega}: L^2(\partial\varOmega)\rightarrow L^2(\partial\varOmega)$
\begin{equation}\label{singular integral K}
	K_{\varOmega}\varphi(x):=\frac{1}{\omega_d}\textrm{p.v.}
	\int_{\partial\varOmega}\frac{(y-x)\cdot n_y}{|x-y|^d}\varphi(y)d\sigma(y)
\end{equation}
and its $L^2-$adjoint 
\begin{equation}\label{singular integral K*}
	K^*_{\varOmega}\varphi(x)=\frac{1}{\omega_d}\textrm{p.v.}
	\int_{\partial\varOmega}\frac{(x-y)\cdot n_x}{|x-y|^d}\varphi(y)d\sigma(y).
\end{equation}
Given a function $v$ defined in a neighbourhood of $\partial\varOmega$, set
\begin{equation*}
	v(x)\big|_{\pm}:=\lim\limits_{h\rightarrow 0^+}v(x\pm h n_x),\ \  x\in\partial\varOmega.
\end{equation*}
The following relations hold, a.e. in $\partial\varOmega$,
\begin{equation}\label{traces layer potentials}
	S_{\varOmega}\varphi\big|_{+}=S_{\varOmega}\varphi\big|_{-},\quad
	\frac{\partial S_{\varOmega}\varphi}{\partial n_x}\Big|_{\pm}=\left(\pm\tfrac{1}{2}I+K^*_{\varOmega}\right)\varphi,\quad
	D_{\varOmega}\varphi\Big|_{\pm}=\left(\mp\tfrac{1}{2}I+K_{\varOmega}\right)\varphi.
\end{equation}
For the proof see \cite{Ammari-Kang, Folland}.

\subsection*{Well-posedness}

Proving existence and uniqueness results for unbounded domains with unbounded boundary is, in general, much more difficult with respect to the case of exterior domains. The main obstacle is the control of both solution decay and integrability on the boundary and usual approach is based on the use of weighted Sobolev spaces \cite{Amrouche-Bonzom}. Here, we take advantage of the symmetry property of the half-space to extend the problem to the whole space and to establish well-posedness resorting in a standard Sobolev setting. 

Given a bounded Lipschitz domain $C\subset\mathbb{R}^d_-$ and the function $g:\partial C\rightarrow \mathbb{R}$, we define
\begin{equation*}
\widetilde{C}:=\{(x',x_d):\ (x',-x_d)\in C\}
\end{equation*}
and $G: \partial C\cup \partial\widetilde{C}\rightarrow \mathbb{R}$ as
\begin{equation*}
	G(x):=\begin{cases}
		g(x) & \text{if}\ x\in \partial C\\
		g(\tilde{x}) & \text{if}\ x\in \partial \widetilde{C}.
		\end{cases}
\end{equation*}

\begin{theorem}
The problem \eqref{direct problem} has a unique solution.
This solution coincides with the restriction to the half-space $\mathbb{R}^d_-$ of the solution to 
\begin{equation}\label{extended problem}
	\Delta U=0\quad \textrm{in}\ \mathbb{R}^d\setminus\left(C\cup \widetilde{C}\right),\quad
	\frac{\partial U}{\partial n}=G\quad \textrm{on}\ \partial C \cup \partial \widetilde{C},\quad
	U\to 0\quad \text{as}\ |x|\rightarrow +\infty.
\end{equation}
\end{theorem}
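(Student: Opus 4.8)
The plan is to use the reflection $x\mapsto\widetilde x$ across the hyperplane $\{x_d=0\}$ to identify solutions of \eqref{direct problem} with \emph{reflection-symmetric} solutions of the exterior problem \eqref{extended problem}, and then to observe that in fact \emph{every} solution of \eqref{extended problem} is automatically symmetric. Since $C$ is bounded and Lipschitz with $\overline C\subset\mathbb{R}^d_-$, the set $\varOmega:=C\cup\widetilde C$ is bounded and Lipschitz, its two pieces $C$ and $\widetilde C$ are disjoint, and $G\in L^2(\partial\varOmega)$ whenever $g\in L^2(\partial C)$. Thus \eqref{extended problem} is the exterior Neumann problem for the Laplacian in $\mathbb{R}^d\setminus\varOmega$, $d\ge3$, which is well posed: the classical theory recalled above (see \cite{Folland}) provides a unique solution $U$, harmonic at infinity, obeying the decay bounds \eqref{asympotic behaviour}; concretely, one may seek $U=S_\varOmega\varphi$ and invert $-\tfrac12 I+K^*_\varOmega$ on $L^2(\partial\varOmega)$ using \eqref{traces layer potentials}.

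\emph{Symmetry of the extended solution.} Let $U$ solve \eqref{extended problem} and set $V(x):=U(\widetilde x)$. Writing $\widetilde x=Rx$ with $R=R^{T}=R^{-1}$ the (orthogonal, involutive) reflection matrix, one has $\Delta V(x)=(\Delta U)(Rx)=0$ on $R^{-1}(\mathbb{R}^d\setminus\varOmega)=\mathbb{R}^d\setminus\varOmega$ (because $R\varOmega=\varOmega$), and $V\to0$ at infinity. Since $R$ interchanges $\partial C$ and $\partial\widetilde C$ and carries the outward unit normal $n$ of $\varOmega$ at $x$ to the outward unit normal at $Rx$, the chain rule and $Rn_x=n_{Rx}$ give $\partial_n V(x)=\nabla U(Rx)\cdot n_{Rx}=\partial_n U(Rx)=G(Rx)=G(x)$ on $\partial\varOmega$, the last equality being exactly the definition of $G$. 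Hence $V$ also solves \eqref{extended problem}, and uniqueness forces $V\equiv U$, i.e. $U(\widetilde x)=U(x)$ for all $x$.

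\emph{From \eqref{extended problem} to \eqref{direct problem} and back.} Given the unique (hence symmetric) solution $U$ of \eqref{extended problem}, put $u:=U|_{\mathbb{R}^d_-}$. Then $\Delta u=0$ in $\mathbb{R}^d_-\setminus C$, $\partial_n u=g$ on $\partial C$, and $u\to0$ at infinity; moreover, as $U$ is harmonic in a neighbourhood of $\{x_d=0\}$, differentiating $U(x',x_d)=U(x',-x_d)$ in $x_d$ and setting $x_d=0$ yields $\partial_{x_d}u=0$ on $\mathbb{R}^{d-1}$, so $u$ solves \eqref{direct problem}. Conversely, let $u$ be any solution of \eqref{direct problem} and define $U(x):=u(x)$ for $x_d\le0$, $U(x):=u(\widetilde x)$ for $x_d>0$. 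This $U$ is continuous across $\{x_d=0\}$, and its one-sided normal derivatives there are $\partial_{x_d}u(x',0)$ from below and $-\partial_{x_d}u(x',0)$ from above, both vanishing by the Neumann condition on $\mathbb{R}^{d-1}$; since a function harmonic on each side of a hyperplane with matching Cauchy data across it is a weak, hence classical, harmonic function through it, $U$ is harmonic in $\mathbb{R}^d\setminus\varOmega$. The condition on $\partial\widetilde C$ (via $Rn_x=n_{Rx}$ again) and the decay at infinity are inherited from $u$, so $U$ solves \eqref{extended problem}; uniqueness there forces $u=U|_{\mathbb{R}^d_-}$. This establishes existence, uniqueness, and the stated identification.

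The genuinely substantial ingredient is the well-posedness and decay theory for the exterior Neumann problem \eqref{extended problem}, imported wholesale from the classical references. The points within the argument needing care are the transformation law for the normal derivative under the reflection in the symmetry step (getting the signs right from the orthogonality and involutivity of $R$, and from the fact that $R$ maps outward normals of $\varOmega$ to outward normals of $\varOmega$), and the elementary gluing statement used in the converse direction, namely that a $C^1$ function harmonic on both sides of a hyperplane is harmonic across it; one should also record explicitly that $\overline C\subset\mathbb{R}^d_-$ keeps $\{x_d=0\}$ inside the region of harmonicity throughout.
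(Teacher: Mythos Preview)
Your argument is correct and follows essentially the same route as the paper: reduce \eqref{direct problem} to the exterior problem \eqref{extended problem} via reflection, invoke well-posedness of the latter, and use the symmetry $U(\widetilde x)=U(x)$ (obtained from uniqueness) to verify the Neumann condition on $\{x_d=0\}$. Two small remarks: in your parenthetical on the layer-potential construction, the exterior trace gives $\bigl(\tfrac12 I+K^*_\varOmega\bigr)\varphi=G$, not $-\tfrac12 I+K^*_\varOmega$ (cf.\ \eqref{traces layer potentials} and the paper's own Step~2); and your explicit converse direction---reflecting an arbitrary solution of \eqref{direct problem} and gluing across the hyperplane---is a welcome addition, since the paper's proof establishes existence for \eqref{direct problem} from \eqref{extended problem} but leaves the uniqueness of \eqref{direct problem} itself implicit.
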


\begin{proof}
The proof is divided into three steps: uniqueness for \eqref{extended problem},
existence for \eqref{extended problem}, equivalence between \eqref{direct problem} and \eqref{extended problem}. 

1. For $\varLambda:=C\cup\widetilde{C}$, let $R>0$ be such that $\varLambda\subset B_R(0)$
and set $\varOmega_R:=B_R\setminus \varLambda$.
Given two solutions, $U_1$ and $U_2$, to problem \eqref{extended problem},
the difference $W:=U_1-U_2$ solves the corresponding homogeneous problem.
Multiplying equation $\Delta W=0$ by $W$ and integrating over the domain $\varOmega_R=B_R\setminus \varLambda$, we infer
\begin{equation*}
	\begin{aligned}
	0&=\int_{\varOmega_R}W(x)\Delta W(x) dx\\
%		&=\int_{\partial\varOmega_R}W(x)\frac{\partial}{\partial n}W(x)d\sigma(x)-\int_{\varOmega_R}\big|\nabla W(x)\big|^2 dx\\
		&=\int_{\partial B_R(0)}W(x)\frac{\partial}{\partial n}W(x)d\sigma(x)-\int_{\varOmega_R}\big|\nabla W(x)\big|^2 dx,
\end{aligned}
\end{equation*}
using integration by parts and boundary conditions. 
Exploiting the behaviour of the harmonic functions in exterior domains, as described in \eqref{asympotic behaviour},
we get
\begin{equation*}
	\Big|\int_{\partial B_R(0)}W(x)\frac{\partial}{\partial n}W(x)\,d\sigma(x)\Big|\leq \frac{C}{R^{d-2}}.
\end{equation*}
Then, as $R\to\infty$, we find
\begin{equation*}
	\int_{\mathbb{R}^d\setminus \varLambda}\big|\nabla W(x)\big|^2 dx=0
\end{equation*}
which implies $W=0$.

2. We represent the solution of \eqref{extended problem} by means of single layer potential
\begin{equation}
	S_{\varLambda}\psi(x)=\int_{\partial \varLambda}\varGamma(x-y)\psi(y)d\sigma(y),
	\qquad x\in\mathbb{R}^d\setminus \varLambda,
\end{equation}
with function $\psi$ to be determined. By the properties of single layer potential, $S_{\varLambda}\psi$ is harmonic
in $\mathbb{R}^d\setminus\partial \varLambda$, $S_{\varLambda}\psi(x)$=$\textit{O}(|x|^{2-d})$ as $|x|\rightarrow\infty$
and we have
\begin{equation*}
	\frac{\partial S_{\varLambda}\psi}{\partial n}(x)\bigg|_+=\tfrac{1}{2}\psi+K^{*}_\varLambda\psi,
	\qquad x\in \partial\varLambda.
\end{equation*}
We now prove that there exists a function $\psi$ such that
\begin{equation}\label{boundary integral equation for unbounded domain}
	\left(\tfrac{1}{2}I+K^*_{\varLambda}\right)\psi(x)=G(x),
	\qquad x\in \partial\varLambda.
\end{equation} 
To this aim we prove that the operator $\tfrac{1}{2}I+K^*_\varLambda$ is injective.
Indeed, given $\zeta\in\ker(\frac{1}{2}I+K^*_\varLambda)$, define
 $V:=S_{\varLambda}\zeta$.
Then, from the properties of layer potentials, $W$ solves
\begin{equation*}
	\Delta V=0\quad \text{in}\;\mathbb{R}^d\setminus \varLambda,\qquad
	\frac{\partial}{\partial n}V=0\quad\text{on}\;\partial \varLambda,\qquad
	V\rightarrow 0\quad\text{as}\;|x|\rightarrow \infty,
\end{equation*}
%\begin{equation*}
%\begin{cases}
%\vspace{0.2cm}
%\Delta W=0 & \text{in}\ \mathbb{R}^d\setminus \varLambda\\
%\vspace{0.2cm}
%\displaystyle\frac{\partial}{\partial n}W=0 & \text{on}\ \partial \varLambda\\
%W\rightarrow 0 & \text{as}\ |x|\rightarrow \infty
%\end{cases}
%\end{equation*}
hence, from Step 1., the function $V$ is identically zero; then it follows that $\zeta\equiv 0$.
Finally,  observing that $K^*_{\varLambda}$ is a compact operator and $G\in L^2(\partial \varLambda)$, 
equation \eqref{boundary integral equation for unbounded domain} admits a unique solution.

3. To show that $u:=U\big|_{x_d<0}$, we have to verify that the normal derivative is null on the boundary of the half-space.
This is an immediate consequence of the symmetry property
\begin{equation}\label{symmetry of U}
	U(x',-x_d)=U(x',x_d),
\end{equation}
which follows from the observation that, by definition of the boundary datum $G$, the function
$\bar{u}(x',x_d):=U(x',-x_d)$ solves \eqref{extended problem} and the solution to such problem is unique.

As a consequence of \eqref{symmetry of U}, we obtain
\begin{equation*}
	\frac{\partial \bar{u}}{\partial x_d}(x',x_d)=\frac{\partial U}{\partial x_d}(x',x_d)
	=-\frac{\partial U}{\partial x_d}(x',-x_d).
\end{equation*} 
Thus the derivative of $U$ with respect to $x_d$ computed at any point with $x_d=0$ is zero.
\end{proof}

\subsection*{Representation formula}

Next, we derive an integral representation formula for the solution $u$ to problem (\ref{direct problem}).
This makes use of the single and double layer potentials defined in (\ref{single and double layer definition}) and of contributions relative to the image cavity $\widetilde{C}$, given by
\begin{equation}\label{single double layer potential image}
	\begin{aligned}
	\widetilde{S}_C \varphi(x)&:=\int_{\partial C}\varGamma(\widetilde{x}-y)\varphi(y)d\sigma(y),
		\qquad & x\in\mathbb{R}^d, \\
	\widetilde{D}_C \varphi(x)&:=\int_{\partial C}\frac{\partial}{\partial n_y}\varGamma(\widetilde{x}-y)\varphi(y)d\sigma(y)
		\qquad & x\in\mathbb{R}^d\setminus \partial\widetilde{C}.
	\end{aligned}
\end{equation}
These operators, referred to as {\it image layer potentials}, can be read as single and double layer potentials
on $\widetilde{C}$ applied to the reflection of the function $\varphi$ with respect to $x_d$ coordinate. 

\begin{theorem} \label{theorem representation formula}
The solution $u$ to problem \eqref{direct problem} is such that
\begin{equation}\label{representation formula}
	u(x)=S_Cg(x)-D_Cf(x)+\widetilde{S}_Cg(x)-\widetilde{D}_Cf(x),
	\qquad x\in\mathbb{R}_-^d\setminus C,
\end{equation}
where $S_C, D_C$ are defined in \eqref{single and double layer definition}, $\widetilde{S}_C, \widetilde{D}_C$
in \eqref{single double layer potential image}, $g$ is the Neumann boundary condition in \eqref{direct problem}
and $f$ is the trace of $u$ on $\partial C$. 
\end{theorem}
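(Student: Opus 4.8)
The route I would follow is to read \eqref{representation formula} off Green's second identity applied to the solution $u$ and to the Neumann function of the half-space
\[
	N(x,y)=\varGamma(x-y)+\varGamma(\widetilde x-y),
\]
the two features of $N$ that make this work being: \emph{(i)} for fixed $x\in\mathbb{R}^d_-$ the image term $y\mapsto\varGamma(\widetilde x-y)$ is harmonic and smooth on all of $\mathbb{R}^d_-$ (since $\widetilde x\in\mathbb{R}^d_+$), so that $N(x,\cdot)$ is harmonic in $\mathbb{R}^d_-\setminus\{x\}$ with exactly the same local singularity at $y=x$ as $\varGamma(x-\cdot)$; and \emph{(ii)} $\partial N(x,y)/\partial y_d=0$ whenever $y_d=0$, which is immediate from $\varGamma(z)=\kappa_d|z|^{2-d}$ and the reflection structure, the $x_d$-derivatives of the two summands cancelling on the plane.

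Fix $x\in\mathbb{R}^d_-\setminus C$ and, for $R$ large and $\rho$ small, set
\[
	\varOmega_{R,\rho}:=\bigl(\mathbb{R}^d_-\cap B_R(0)\bigr)\setminus\bigl(\overline{C}\cup\overline{B_\rho(x)}\bigr),
\]
whose boundary consists of $\partial C$, the flat disk $\varGamma_R:=\mathbb{R}^{d-1}\cap B_R(0)$, the spherical cap $\varSigma_R:=\partial B_R(0)\cap\mathbb{R}^d_-$ and the small sphere $\partial B_\rho(x)$. As both $u$ and $N(x,\cdot)$ are harmonic in $\varOmega_{R,\rho}$, Green's identity gives $\int_{\partial\varOmega_{R,\rho}}\bigl(u\,\partial_{n}N-N\,\partial_{n}u\bigr)\,d\sigma=0$, with $n$ the outward normal to $\varOmega_{R,\rho}$. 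On $\varGamma_R$ — where $u$ is smooth, being the restriction of the harmonic extension $U$ of the well-posedness theorem — the integrand vanishes pointwise, since $\partial_{n}N(x,\cdot)=0$ by \emph{(ii)} and $\partial_{n}u=\partial u/\partial x_d=0$ by the boundary condition in \eqref{direct problem}. On $\varSigma_R$ one combines $N(x,y)=O(|y|^{2-d})$ and $\nabla_yN(x,y)=O(|y|^{1-d})$ with the decay estimates \eqref{asympotic behaviour} — available for $u$ precisely because $u=U|_{\mathbb{R}^d_-}$ with $U$ harmonic in the exterior domain $\mathbb{R}^d\setminus\varLambda$ and vanishing at infinity — to bound the corresponding contribution by $CR^{2-d}\to0$. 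On $\partial B_\rho(x)$ the classical local analysis of the fundamental solution (by \emph{(i)} the smooth image term drops out in the limit) yields $\int_{\partial B_\rho(x)}\bigl(u\,\partial_{n}N-N\,\partial_{n}u\bigr)\,d\sigma\to-u(x)$ as $\rho\to0$.

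Letting $\rho\to0$ and $R\to\infty$ therefore leaves
\[
	u(x)=\int_{\partial C}\Bigl(N(x,y)\,\partial_{n_y}u(y)-u(y)\,\partial_{n_y}N(x,y)\Bigr)\,d\sigma(y),
\]
where $n_y$ denotes the outward unit normal to $C$ (the outward normal of $\varOmega_{R,\rho}$ along $\partial C$ points into $C$, and reverting it produces this sign). Inserting $\partial u/\partial n=g$ and $u|_{\partial C}=f$ and splitting $N(x,y)=\varGamma(x-y)+\varGamma(\widetilde x-y)$, the term $\int_{\partial C}N(x,y)g(y)\,d\sigma(y)$ equals $S_Cg(x)+\widetilde S_Cg(x)$ while $\int_{\partial C}\partial_{n_y}N(x,y)f(y)\,d\sigma(y)$ equals $D_Cf(x)+\widetilde D_Cf(x)$, directly from the definitions \eqref{single and double layer definition} and \eqref{single double layer potential image}; this is exactly \eqref{representation formula}.

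The step I expect to require the most care is the regularity issue: $C$ is only Lipschitz and $u$ is a priori a finite-energy solution, so Green's identity across $\partial C$ must be justified. I would handle this either by first proving the formula for smooth $\partial C$ and approximating, or — more in keeping with the rest of the paper — by invoking the single layer representation $u=U|_{\mathbb{R}^d_-}$, $U=S_\varLambda\psi$, produced in the well-posedness theorem: the jump relations \eqref{traces layer potentials} then supply the trace $f$ and the normal derivative $g$ in $L^2(\partial C)$ and legitimize the boundary integrals over the Lipschitz surface, the interior and far-field arguments being unchanged. The verification of \emph{(i)} and \emph{(ii)} and of the decay rates of $N$ and $\nabla_yN$ is elementary.
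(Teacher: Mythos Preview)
Your proof is correct and follows essentially the same route as the paper: Green's second identity for $u$ and $N(x,\cdot)$ on the truncated domain $\bigl(\mathbb{R}^d_-\cap B_R(0)\bigr)\setminus\bigl(C\cup B_\rho(x)\bigr)$, with the flat piece vanishing by the Neumann conditions, the spherical cap vanishing by the decay estimates \eqref{asympotic behaviour}, and the small sphere contributing $u(x)$ in the limit. Your closing remark about justifying the boundary integral over the Lipschitz surface $\partial C$ is a point the paper does not address explicitly, so in that respect you are being more careful than the original.
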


Using properties of layer potentials, from equation \eqref{representation formula}, we infer
\begin{equation*}
	f(x)=S_Cg(x)-\left(-\tfrac{1}{2}I+K_C\right)f(x)-\widetilde{D}_Cf(x)+\widetilde{S}_Cg(x), \quad  x\in\partial C,
\end{equation*}
where $K_C$ is defined in \eqref{singular integral K}.
Thus, the trace $f$ satisfies the relation
\begin{equation*}
	\left(\tfrac{1}{2}I+K_C+\widetilde{D}_C\right)f=S_Cg+\widetilde{S}_Cg,
\end{equation*}
which will turn out to be useful in the sequel.

Before proving Theorem \ref{theorem representation formula}, we first recall the definition of the
Neumann function, see \cite{Hein}, that is the solution $N=N(x,y)$ to
\begin{equation*}
	\Delta _y N(x,y)=\delta_x(y)\quad \textrm{in}\ \mathbb{R}^d_-,\qquad
	\frac{\partial N}{\partial y_d}(x,y)=0\quad \text{on} \ \mathbb{R}^{d-1},
\end{equation*} 
where $\delta_x(y)$ is the delta function centred in a fixed point $x\in\mathbb{R}^d$ and $\partial N/\partial y_d$
represents the normal derivative on the boundary of the half-space $\mathbb{R}^d_-$.  
The classical method of images provides the explicit expression
\begin{equation*}
	N(x,y)=\frac{\kappa_d}{|x-y|^{d-2}}+\frac{\kappa_d}{|\widetilde{x}-y|^{d-2}}.
\end{equation*}
With the function $N$ at hand, the representation formula \eqref{representation formula}
can be equivalently written as  
\begin{equation}\label{expression of representation formula with Neumann}
	\begin{aligned}
	u(x)&=\mathcal{N}(f,g)(x)\\
		&:=\int_{\partial C}\left[N(x,y)g(y)-\frac{\partial}{\partial n_y}N(x,y)f(y)\right]d\sigma(y),
			\quad x\in\mathbb{R}_-^d\setminus C,
	\end{aligned}
\end{equation}
which we now prove.

\begin{proof}[Proof of Theorem \ref{theorem representation formula}]
Given $R, \varepsilon >0$ such that $C\subset B_R(0)$ and $B_{\varepsilon}(x)\subset \mathbb{R}^d_-\setminus C$, let
\begin{equation*}
	\varOmega_{R,\varepsilon}:=\Bigl(\mathbb{R}^d_-\cap B_R(0)\Bigr)\setminus  \Bigl(C \cup B_{\varepsilon}(x)\Bigr).
\end{equation*}
We also define $\partial B^h_R(0)$ as the intersection of the hemisphere with the boundary of the half-space,
and with $\partial B^b_R(0)$ the spherical cap (see Figure \ref{figure hemisphere with cavity}).
\begin{figure}[h]
\centering
\def\svgwidth{220pt}
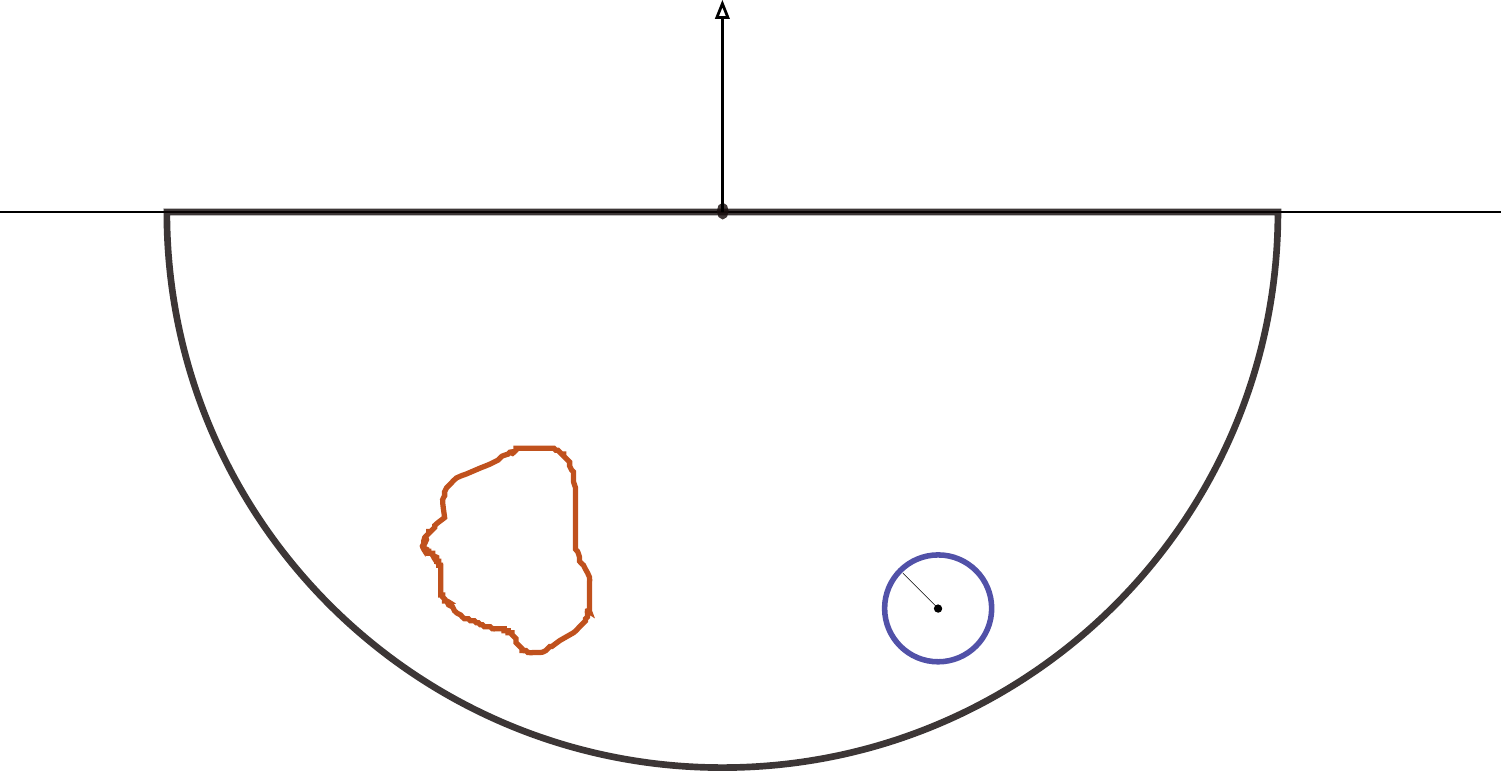
\caption{Domain $\varOmega_{R,\varepsilon}$ used to obtain the integral representation formula
\eqref{representation formula}.}\label{figure hemisphere with cavity}
\end{figure}
Applying second Green's identity to $N(x,\cdot)$ and $u$ in $\varOmega_{R,\varepsilon}$, we get
\begin{equation*}
	\begin{aligned}
	0=&\int_{\varOmega_{R,\varepsilon}}\left[N(x,y)\Delta u(y)-u(y)\Delta_y N(x,y)\right]dy\\
	=&\int_{\partial B^h_R(0)}\left[N(x,y)\frac{\partial u}{\partial y_d}(y)-\frac{\partial}{\partial y_d}N(x,y)u(y)\right]d\sigma(y)\\
	&+\int_{\partial B^b_R(0)}\left[N(x,y)\frac{\partial u}{\partial n_y}(y)-\frac{\partial}{\partial n_y}N(x,y)u(y)\right]d\sigma(y)\\
	&+\int_{\partial B_{\varepsilon}(x)}\left[\frac{\partial}{\partial n_y}N(x,y)u(y)-N(x,y)\frac{\partial u}{\partial n_y}(y)\right]d\sigma(y)\\
	&-\int_{\partial C}\left[N(x,y)\frac{\partial u}{\partial n_y}(y)-\frac{\partial}{\partial n_y}N(x,y)u(y)\right]d\sigma(y)\\
	=& I_1+I_2+I_3-\mathcal{N}(f,g)(x).
	\end{aligned}
\end{equation*}     
The term $I_1$ is zero since both the normal derivative of the function $N$ and $u$ are zero above the boundary of the half-space.

Next, taking into account the behaviour of harmonic functions in exterior domains, formulas \eqref{asympotic behaviour}, we deduce
\begin{equation*}
	\begin{aligned}
	\Big|\int_{\partial B^b_R(0)}\frac{\partial }{\partial n_y}N(x,y)u(y)\,d\sigma(y)\Big|
		&\leq \frac{C}{R^{2d-3}}\int_{\partial B^b_R(0)}\,d\sigma(y)=\frac{C}{R^{d-2}},\\
	\Big|\int_{\partial B^b_R(0)}N(x,y)\frac{\partial u(y)}{\partial n_y}\,d\sigma(y)\Big|
		&\leq \frac{C}{R^{2d-3}}\int_{\partial B^b_R(0)}\,d\sigma(y)=\frac{C}{R^{d-2}},
	\end{aligned}
\end{equation*}
where $C$ denotes a generic positive constant.
As $R\to+\infty$, $I_2$ tends to zero. 

Finally, we decompose $I_3$ as
\begin{equation*}
	I_3=I_{31}-I_{32}=\int_{\partial B_{\varepsilon}(x)}\frac{\partial}{\partial n_y}N(x,y)u(y)\,d\sigma(y)
		-\int_{\partial B_{\varepsilon}(x)}N(x,y)\frac{\partial u}{\partial n_y}(y)d\sigma(y).
\end{equation*}
Using the expression of $N$ and the continuity of $u$, we derive
\begin{equation*}
	\begin{aligned}
	I_{31}=\int_{\partial B_{\varepsilon}(x)}\frac{\partial}{\partial n_y}N(x,y)u(y)d\sigma(y)
	=& u(x)\int_{\partial B_{\varepsilon}(x)}\frac{\partial}{\partial n_y}N(x,y)d\sigma(y)\\
	&+\int_{\partial B_{\varepsilon}(x)}\left[u(y)-u(x)\right]\frac{\partial}{\partial n_y}N(x,y)d\sigma(y),
	\end{aligned}
\end{equation*} 
which tends to $u(x)$ as $\varepsilon\rightarrow 0$.
Moreover, we infer
\begin{equation*}
	\begin{aligned}
	\big|I_{32}\big|&\leq C \sup\limits_{y\in\partial B_{\varepsilon}(x)}\Big|\frac{\partial u}{\partial n_y}
		\Big|\int_{\partial B_{\varepsilon}(x)}\Big|N(x,y)\Big|d\sigma(y)\\
	&\leq C' \sup\limits_{y\in\partial B_{\varepsilon}(x)}\Big|\frac{\partial u}{\partial n_y}
		\Big|\left[\int_{\partial B_{\varepsilon}(x)}\frac{1}{\varepsilon^{d-2}}d\sigma(y)
	+\int_{\partial B_{\varepsilon}(x)}\frac{1}{|\widetilde{x}-y|^{d-2}}d\sigma(y)\right].
	\end{aligned}
\end{equation*}
Observing that both the integrals tend to zero when $\varepsilon$ goes to zero because the second one has a continuous
kernel while the first one behaves as $O(\varepsilon)$, we infer that $I_{32}\to 0$ as $\varepsilon\to 0$. 
Putting together all the results, we obtain \eqref{expression of representation formula with Neumann}.
\end{proof}

\section{Spectral analysis}\label{section spectral analysis}

Following the approach of Ammari and Kang, see \cite{Ammari-Kang, Ammari-Kang1}, in this Section, we prove
the invertibility of the operator $\frac12\,I+K_C+\widetilde{D}_C$ showing that, under suitable assumptions, the following inclusion holds
\begin{equation*}
	\sigma(K_C+\widetilde{D}_C)\subset\left(-1/2,1/2\right].
\end{equation*}
Such task is accomplished by determining the spectrum of the adjoint operator $K^*_C+\widetilde{D}^*_C$ in $L^2(\partial C)$,
relying on the fact that the two spectra are conjugate.

The explicit expression of $K^*_C$ is in \eqref{singular integral K*}. 
%From classical theory of layer potentials, the adjoint operator $K^*_C$ of $K_C$ in $L^2(\partial C)$ is
%\begin{equation*}
%	K^*_C\varphi(x):=\frac{1}{\omega_d}\textrm{p.v.}\int_{\partial C}\frac{(x-y)\cdot n_x}{|x-y|^d}\varphi(y)d\sigma(y).
%\end{equation*} 
Computing the $L^2$-adjoint of $\widetilde{D}_C$ is straightforward: indeed, given $\psi\in L^2(\partial C)$, we have
\begin{equation*}
\begin{aligned}
\int_{\partial C}\psi(x)\widetilde{D}_C\varphi(x)d\sigma(x)
&=\int_{\partial C}\psi(x)\left(\frac{1}{\omega_d}\int_{\partial C}\frac{(y-\widetilde{x})\cdot n_y}{|\widetilde{x}-y|^d}\varphi(y)d\sigma(y)\right)d\sigma(x)\\
&=\int_{\partial C}\varphi(y)\left(\frac{1}{\omega_d}\int_{\partial C}\frac{(y-\widetilde{x})\cdot n_y}{|\widetilde{x}-y|^d}\psi(x)d\sigma(x)\right)d\sigma(y)
\end{aligned}
\end{equation*}
and thus
\begin{equation}\label{adjoint D tilde}
	\widetilde{D}^*_C\varphi(x)=\frac{1}{\omega_d}\int_{\partial C}\frac{(x-\widetilde{y})\cdot n_x}{|\widetilde{y}-x|^d}\varphi(y)d\sigma(y).
\end{equation}
Note that the kernel of the integral operator $\widetilde{D}^*_C$ is smooth on $\partial C$.

As proved in \cite{Kellogg}, for smooth domains the eigenvalues of $K^*_C$ on $L^2(\partial C)$ lie in $(-1/2,1/2]$;
the same result it is also true for Lipschitz domain (see \cite{Ammari-Kang, Escauriaza-Fabes-Verchota}). 
With the same approach, it can be shown that the same property holds true for $K^*_C+\widetilde{D}^*_C$.

\begin{theorem}\label{theorem operator spectrum}
Let $C$ be an open bounded domain with Lipschitz boundary. Then
\begin{equation*}
	\sigma(K^*_C+\widetilde{D}^*_C)\subset\left(-1/2,1/2\right].
\end{equation*} 
\end{theorem}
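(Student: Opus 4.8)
The plan is to mimic, for $K^*_C+\widetilde{D}^*_C$, the classical energy argument that gives $\sigma(K^*_C)\subset(-1/2,1/2]$ for Lipschitz domains, the only new ingredient being the remark that $\widetilde{D}^*_C$ is the (jump-free) normal derivative of the image single layer potential. Indeed, since $\nabla_x\varGamma(\widetilde{x}-y)=(x-\widetilde{y})/(\omega_d|x-\widetilde{y}|^d)$, one has $\widetilde{D}^*_C\varphi(x)=\partial(\widetilde{S}_C\varphi)/\partial n_x\,(x)$ for $x\in\partial C$, and because $\widetilde{S}_C\varphi(x)=S_C\varphi(\widetilde{x})$ is harmonic in a full neighbourhood of $\overline{C}$, this normal derivative has no jump across $\partial C$. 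Consequently, for $\varphi\in L^2(\partial C)$ the function
\[
	u:=S_C\varphi+\widetilde{S}_C\varphi=\int_{\partial C}N(\cdot,y)\,\varphi(y)\,d\sigma(y)
\]
is harmonic in $\mathbb{R}^d\setminus(\partial C\cup\partial\widetilde{C})$, satisfies $u=O(|x|^{2-d})$ and $\nabla u=O(|x|^{1-d})$ at infinity, is continuous across $\partial C$, enjoys the symmetry $u(\widetilde{x})=u(x)$ (so that $\partial u/\partial x_d=0$ on $\mathbb{R}^{d-1}$), and obeys the jump relations $\partial u/\partial n|_\pm=(\pm\tfrac12 I+K^*_C+\widetilde{D}^*_C)\varphi$ on $\partial C$.

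The first real step is the eigenvalue bound. Let $(K^*_C+\widetilde{D}^*_C)\varphi=\lambda\varphi$ with $\varphi\neq0$. Applying Green's identity to $u$ on $C$ and on $B_R\cap(\mathbb{R}^d_-\setminus\overline{C})$, letting $R\to\infty$ and using \eqref{asympotic behaviour} together with $\partial u/\partial x_d=0$ on $\mathbb{R}^{d-1}$, one obtains
\[
	\int_C|\nabla u|^2=(\lambda-\tfrac12)\,E,\qquad
	\int_{\mathbb{R}^d_-\setminus\overline{C}}|\nabla u|^2=-(\lambda+\tfrac12)\,E,\qquad
	E:=\int_{\partial C}\overline{\varphi}\,u\,d\sigma .
\]
Since $N$ is real and symmetric, $E$ is real and equals $-\big(\int_C|\nabla u|^2+\int_{\mathbb{R}^d_-\setminus\overline{C}}|\nabla u|^2\big)\le0$; moreover $E=0$ would force $u$ to be locally constant, hence (by the decay) to vanish off $\partial C\cup\partial\widetilde{C}$, so that $S_C\varphi=-\widetilde{S}_C\varphi$ would extend smoothly across $\partial C$ and the jump relations \eqref{traces layer potentials} would give $\varphi\equiv0$, a contradiction. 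Thus $E<0$, whence $\lambda\in\mathbb{R}$, and the non-negativity of the two Dirichlet integrals yields at once $\lambda\le1/2$ and $\lambda\ge-1/2$; the endpoint $\lambda=-1/2$ is ruled out because it forces the exterior Dirichlet integral to vanish, hence $u\equiv0$ in $\mathbb{R}^d_-\setminus\overline{C}$, hence (by continuity) $u\equiv0$ in $C$, hence $\varphi\equiv0$ by the same argument. So every eigenvalue of $K^*_C+\widetilde{D}^*_C$ lies in $(-1/2,1/2]$.

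To pass from eigenvalues to the whole spectrum I would follow the symmetrization approach of Escauriaza--Fabes--Verchota, also used by Ammari and Kang. The operator $\widetilde{D}^*_C$ has a smooth kernel on $\partial C$, hence is compact on $L^2(\partial C)$, while $K^*_C$ is bounded on $L^2(\partial C)$. One first establishes the Plemelj-type identity
\[
	(S_C+\widetilde{S}_C)(K^*_C+\widetilde{D}^*_C)=(K_C+\widetilde{D}_C)(S_C+\widetilde{S}_C),
\]
which is a direct consequence of the representation formula of Theorem \ref{theorem representation formula} applied to $u=(S_C+\widetilde{S}_C)\varphi$ --- whose Neumann datum on $\partial C$ is $(\tfrac12 I+K^*_C+\widetilde{D}^*_C)\varphi$ and whose trace is $(S_C+\widetilde{S}_C)\varphi$ --- together with the trace relation derived right after that theorem and the jump relations \eqref{traces layer potentials}. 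Since $S_C+\widetilde{S}_C$ is self-adjoint and, by the energy computation above, $-(S_C+\widetilde{S}_C)$ is positive definite and comparable to the $H^{-1/2}(\partial C)$ inner product, this identity makes $K^*_C+\widetilde{D}^*_C$ self-adjoint with respect to $\langle\varphi,\psi\rangle_*:=-\langle(S_C+\widetilde{S}_C)\varphi,\psi\rangle_{L^2(\partial C)}$. Hence $\sigma(K^*_C+\widetilde{D}^*_C)$ is real and sits inside the closure of the $\langle\cdot,\cdot\rangle_*$-numerical range; running the energy identity above on approximate eigenvectors (Weyl sequences) in place of genuine eigenfunctions shows that this closure is contained in $(-1/2,1/2]$, which is the assertion. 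Taking conjugates gives also $\sigma(K_C+\widetilde{D}_C)\subset(-1/2,1/2]$, so that $\tfrac12 I+K_C+\widetilde{D}_C$ is invertible, as needed in Section~\ref{section asymptotic expansion}.

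The routine parts are the identification $\widetilde{D}^*_C\varphi=\partial(\widetilde{S}_C\varphi)/\partial n$, the reflection symmetry $u(\widetilde{x})=u(x)$, and the energy identities. The genuine obstacle is the last step: for a merely Lipschitz cavity the $L^2$-operator norm of $K^*_C$ may exceed $1/2$, so the spectral inclusion cannot be read off a Neumann series and one must run the Escauriaza--Fabes--Verchota machinery in the present half-space (Neumann-kernel) setting --- proving that $\langle\cdot,\cdot\rangle_*$ is equivalent to an $H^{-1/2}$ inner product, that the spectrum is unchanged when passing between $L^2(\partial C)$ and that completion, and that the lower endpoint $-1/2$ is indeed excluded --- none of which is purely formal.
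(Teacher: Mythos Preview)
Your eigenvalue argument is essentially identical to the paper's: both define the auxiliary potential $u=S_C\varphi+\widetilde{S}_C\varphi$ (the paper writes it as $\psi=S_C\varphi+S_{\widetilde{C}}\widetilde{\varphi}$, which is the same function), read off the jump relations $\partial u/\partial n\big|_\pm=(\pm\tfrac12 I+K^*_C+\widetilde{D}^*_C)\varphi$ on $\partial C$, apply Green's identity on $C$ and on $(\mathbb{R}^d_-\cap B_R)\setminus\overline{C}$ (the boundary contribution on $\mathbb{R}^{d-1}$ vanishing because of the symmetry $u(\widetilde{x})=u(x)$), let $R\to\infty$, and conclude that $\lambda=\tfrac12(A-B)/(A+B)\in[-1/2,1/2]$ with $-1/2$ excluded. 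Up to bookkeeping (you package the two Dirichlet integrals via the quantity $E=\int_{\partial C}\bar{\varphi}\,u$ rather than solving directly for $\lambda$), the two arguments coincide.

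Where you diverge is in the passage from eigenvalues to the full spectrum. The paper does not carry out this step separately: it declares in the preliminaries that $K_C$ is compact, and under that assumption the nonzero spectrum is exhausted by eigenvalues, so the eigenvalue bound finishes the proof. You correctly observe that for a merely Lipschitz boundary $K^*_C$ is in general \emph{not} compact on $L^2(\partial C)$, and you propose instead the Escauriaza--Fabes--Verchota symmetrization: the intertwining identity $(S_C+\widetilde{S}_C)(K^*_C+\widetilde{D}^*_C)=(K_C+\widetilde{D}_C)(S_C+\widetilde{S}_C)$, deduced from the trace relation following Theorem~\ref{theorem representation formula}, makes $K^*_C+\widetilde{D}^*_C$ self-adjoint in the inner product $\langle\varphi,\psi\rangle_*=-\langle(S_C+\widetilde{S}_C)\varphi,\psi\rangle_{L^2}$, after which a numerical-range/Weyl-sequence argument localizes the whole spectrum. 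This buys you a proof that is honest for Lipschitz cavities, at the cost of the nontrivial functional-analytic work you yourself flag at the end (coercivity of $-(S_C+\widetilde{S}_C)$ and spectral invariance under the change of Hilbert space). The paper's route is shorter but, read literally for Lipschitz $\partial C$, leans on a compactness claim that requires a bit more boundary regularity than stated.
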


\noindent
For completeness, we provide here a complete proof of such fact.

Firstly, we observe that the regular operator $\widetilde{D}^*_C$ on the boundary of the cavity can be seen
as the normal derivative of an appropriate single layer potential. 

\begin{lemma}
Given $\varphi\in L^2(\partial C)$ we have that
\begin{equation*}
	\widetilde{D}^*_C\varphi (x)=\frac{\partial}{\partial n_x}\left(S_{\widetilde{C}}\widetilde{\varphi}(x)\right),\ \ x\in\partial C,
\end{equation*}
where $\widetilde{\varphi}\in L^2(\partial\widetilde{C})$ is defined by $\widetilde{\varphi}(x):=\varphi(\widetilde{x})$.
\end{lemma}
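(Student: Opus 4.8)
The plan is to unwind both sides of the claimed identity using their integral-kernel representations and observe that they coincide pointwise. The left-hand side $\widetilde{D}^*_C\varphi(x)$ has the explicit kernel given in \eqref{adjoint D tilde}, namely
\begin{equation*}
	\widetilde{D}^*_C\varphi(x)=\frac{1}{\omega_d}\int_{\partial C}\frac{(x-\widetilde{y})\cdot n_x}{|\widetilde{y}-x|^d}\varphi(y)\,d\sigma(y),
	\qquad x\in\partial C.
\end{equation*}
For the right-hand side, I would start from the definition of the single layer potential on $\widetilde{C}$ applied to $\widetilde{\varphi}$, i.e. $S_{\widetilde{C}}\widetilde{\varphi}(x)=\int_{\partial\widetilde{C}}\varGamma(x-w)\widetilde{\varphi}(w)\,d\sigma(w)$ for $x\notin\partial\widetilde C$, and perform the change of variables $w=\widetilde{y}$ with $y\in\partial C$; since reflection across $\{x_d=0\}$ is an isometry, the surface measure is preserved, $d\sigma(w)=d\sigma(y)$, and $\widetilde{\varphi}(w)=\widetilde\varphi(\widetilde y)=\varphi(y)$. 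This rewrites $S_{\widetilde C}\widetilde\varphi(x)=\int_{\partial C}\varGamma(x-\widetilde y)\varphi(y)\,d\sigma(y)$, which is precisely $\widetilde{S}_C\varphi(x)$ in the notation of \eqref{single double layer potential image}.

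Next I would differentiate in the normal direction at a point $x\in\partial C$. The key point is that $x\in\partial C$ lies in the open set $\mathbb{R}^d\setminus\partial\widetilde C$ (because $C$ and $\widetilde C$ are disjoint, both being bounded subsets separated by the hyperplane $\{x_d=0\}$, with $C\subset\mathbb{R}^d_-$), so the kernel $\varGamma(x-\widetilde y)$ is smooth in $x$ near $\partial C$ and differentiation under the integral sign is justified with no jump relation or principal value needed — this is exactly the remark made right after \eqref{adjoint D tilde} that the kernel of $\widetilde D^*_C$ is smooth on $\partial C$. Computing $\nabla_x\varGamma(x-\widetilde y)$ from $\varGamma(z)=\kappa_d|z|^{2-d}$ gives $\nabla_x\varGamma(x-\widetilde y)=\kappa_d(2-d)|x-\widetilde y|^{-d}(x-\widetilde y)=\frac{1}{\omega_d}\,\frac{x-\widetilde y}{|x-\widetilde y|^{d}}$, using $\kappa_d=1/(\omega_d(2-d))$. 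Dotting with $n_x$ and integrating against $\varphi(y)$ yields
\begin{equation*}
	\frac{\partial}{\partial n_x}\bigl(S_{\widetilde C}\widetilde\varphi(x)\bigr)
	=\frac{1}{\omega_d}\int_{\partial C}\frac{(x-\widetilde y)\cdot n_x}{|x-\widetilde y|^{d}}\varphi(y)\,d\sigma(y),
\end{equation*}
which matches \eqref{adjoint D tilde} verbatim (noting $|x-\widetilde y|=|\widetilde y-x|$), completing the proof.

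There is no serious obstacle here; the only points requiring a line of care are (i) justifying that $x\in\partial C$ avoids $\partial\widetilde C$ so that differentiating under the integral is legitimate — this uses $\mathrm{dist}(\partial C,\{x_d=0\})>0$, which holds since $C\subset\mathbb{R}^d_-$ is a bounded Lipschitz domain hence its closure is a compact subset of the open half-space — and (ii) keeping track of the constants $\kappa_d$ and $\omega_d$ so that the final kernel has the normalization $1/\omega_d$ matching \eqref{adjoint D tilde}. Everything else is the change of variables for the reflection isometry, which is routine.
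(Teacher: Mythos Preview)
Your proof is correct and follows essentially the same approach as the paper: both arguments reduce to the identity $\nabla_x\varGamma(x-\widetilde y)\cdot n_x=\frac{1}{\omega_d}\frac{(x-\widetilde y)\cdot n_x}{|x-\widetilde y|^d}$ together with the reflection change of variables $w=\widetilde y$ identifying $S_{\widetilde C}\widetilde\varphi$ with $\widetilde S_C\varphi$, only performed in the opposite order. Your added remark that $\partial C$ and $\partial\widetilde C$ are separated, so no jump relation intervenes when taking $\partial/\partial n_x$, makes explicit what the paper leaves implicit.
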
  

\begin{proof}
Using the expression \eqref{adjoint D tilde} of $\widetilde{D}^*_C$ and the identity
\begin{equation*}
	\nabla_x\left(\frac{1}{(2-d)|x-y|^{d-2}}\right)=\frac{x-y}{|x-y|^d},
\end{equation*}  
we find that
%, for $\kappa_d:=1/\omega_d(2-d)$,
\begin{equation*}
	\widetilde{D}^*_C\varphi(x)=\nabla_x\left(\int_{\partial C}
	\frac{\kappa_d\,\varphi(y)}{|\widetilde{y}-x|^{d-2}}d\sigma(y)\right)\cdot n_x.
\end{equation*}
Given $\varphi\in L^2(\partial C)$ and $\widetilde{\varphi}\in L^2(\partial \widetilde{C})$ as previously defined, we have
\begin{equation*}
	\begin{aligned}
	\int_{\partial C}\frac{\varphi(y)}{|\widetilde{y}-x|^{d-2}}d\sigma(y)
	&=\int_{\partial \widetilde{C}}\frac{\varphi(\widetilde{z})}{|\widetilde{\widetilde{z}}-x|^{d-2}}d\sigma(z)\\
	&=\int_{\partial \widetilde{C}}\frac{\varphi(\widetilde{z})}{|z-x|^{d-2}}d\sigma(z)
		=\int_{\partial \widetilde{C}}\frac{\widetilde{\varphi}(z)}{|z-x|^{d-2}}d\sigma(z),
	\end{aligned}
\end{equation*} 
which gives the conclusion.
\end{proof}

We are now ready to prove the main result of this Section.

\begin{proof}[Proof of Theorem \eqref{theorem operator spectrum}]
Given $\varphi\in L^2(\partial C)$, let $\psi$ be defined by
$\psi:=S_C\varphi+S_{\widetilde{C}}\widetilde{\varphi}$.
By the known properties of single layer potentials,  we derive on $\partial C$ 
\begin{equation*}
	\frac{\partial\psi}{\partial n}\Big|_{\pm}=\left(\pm\tfrac{1}{2}I+K^*_C+\widetilde{D}^*_C\right)\varphi
\end{equation*} 
and, as a consequence, 
\begin{equation}\label{eigenfunctions in terms of W}
	\frac{\partial\psi}{\partial n}\bigg|_+ + \frac{\partial\psi}{\partial n}\bigg|_-=2\left(K^*_C+\widetilde{D}^*_C\right)\varphi,\qquad
	\frac{\partial \psi}{\partial n}\bigg|_+ - \frac{\partial \psi}{\partial n}\bigg|_-=\varphi.
\end{equation}
Taking a linear combination of the two relations in \eqref{eigenfunctions in terms of W}, we deduce
\begin{equation*}
	\begin{aligned}
	\left(\lambda I-K^*_C-\widetilde{D}^*_C\right)\varphi&=\lambda\left(\frac{\partial  \psi}{\partial n}\bigg|_+
		- \frac{\partial \psi}{\partial n}\bigg|_-\right)-\frac{1}{2}\left(\frac{\partial \psi}{\partial n}\bigg|_+
		+ \frac{\partial \psi}{\partial n}\bigg|_-\right)\\
	&=\left(\lambda-\frac{1}{2}\right)\frac{\partial \psi}{\partial n}\bigg|_+
		-\left(\lambda+\frac{1}{2}\right)\frac{\partial \psi}{\partial n}\bigg|_-.
	\end{aligned}
\end{equation*}
If $\lambda$ is an eigenvalue of $K^*_C+\widetilde{D}^*_C$ with eigenfunction $\varphi$, 
then
\begin{equation*}
	\left(\lambda-\frac{1}{2}\right)\frac{\partial \psi}{\partial n}\bigg|_+
		-\left(\lambda+\frac{1}{2}\right)\frac{\partial \psi}{\partial n}\bigg|_-=0,
	\qquad\textrm{on}\quad\partial C.
\end{equation*}
Multiplying such relation by the function $\psi$ and integrating over $\partial C$, we get
\begin{equation}\label{second equation eigenvalue problem}
	\left(\lambda-\frac{1}{2}\right)\int_{\partial C} \psi(x)\frac{\partial \psi}{\partial n}(x)\bigg|_+d\sigma(x)
		-\left(\lambda+\frac{1}{2}\right)\int_{\partial C} \psi(x)\frac{\partial \psi}{\partial n}(x)\bigg|_-d\sigma(x)=0.
\end{equation}
Integrating by parts we have
\begin{equation}\label{integral of interior}
	\begin{aligned}
	\int_{\partial C} \psi(x)\frac{\partial \psi}{\partial n}(x)\bigg|_-d\sigma(x)
		&=\int_{C} \psi(x)\Delta \psi(x)dx+\int_{C}\big|\nabla \psi(x)\big|^2\,dx\\
		&=\int_{C}\big|\nabla \psi(x)\big|^2\,dx.
	\end{aligned}
\end{equation}
The first integral in \eqref{second equation eigenvalue problem} can be dealt with as done in 
the proof of Theorem \ref{theorem representation formula}.
Precisely, given large $R>0$, applying the Green's formula in $\varOmega_R:=\left(\mathbb{R}^d_-\cap B_R(0)\right)\setminus C$,
we get
\begin{equation*}
\begin{aligned}
\int_{\partial C} \psi(x)\frac{\partial \psi}{\partial n}(x)\bigg|_+d\sigma(x)&=\int_{\partial B^h_R(0)}\psi(x)\frac{\partial \psi}{\partial x_d}(x)d\sigma(x)
+\int_{\partial B^b_R(0)}\psi(x)\frac{\partial \psi}{\partial n}(x)\bigg|_+d\sigma(x)\\
&\quad -\int_{\varOmega_R} \psi(x)\Delta \psi(x)dx-\int_{\varOmega_R}\big|\nabla \psi(x)\big|^2dx,
\end{aligned}
\end{equation*}
where $\partial B^h_R(0)$ is the intersection of the hemisphere
with the half-space and $\partial B^b_R(0)$ is the spherical cap.
The quantity $\partial \psi/\partial n$ is identically zero on the boundary of the half-space since the kernel of the operator is the normal derivative of the Neumann function which, by hypothesis, is null on $\mathbb{R}^{d-1}$. Moreover, $\psi$ is harmonic in $\varOmega_R$, so we infer
\begin{equation*}
\int_{\partial C} \psi(x)\frac{\partial \psi}{\partial n}(x)\bigg|_+d\sigma(x)=\int_{\partial B^b_R(0)}\psi(x)\frac{\partial \psi}{\partial n}(x)\bigg|_+d\sigma(x)-\int_{\varOmega_R}\big|\nabla \psi(x)\big|^2dx.
\end{equation*}
Recalling the asymptotic behaviour of simple layer potential,
\begin{equation*}
	\big|S_C\varphi\big|+\big|S_{\widetilde{C}}\varphi\big|=O(|x|^{2-d}),\qquad
	\big|\nabla S_C\varphi\big|+\big|\nabla S_{\widetilde{C}}\varphi\big|=O(|x|^{1-d})
	\qquad \textrm{as}\ \ |x|\to\infty.
\end{equation*}
we obtain, for some $C>0$,
\begin{equation*}
	\begin{aligned}
	\bigg|\int_{\partial B^b_R(0)}\psi(x)\frac{\partial \psi}{\partial n}(x)\bigg|_+d\sigma(x)\bigg|
	&\leq \int_{\partial B^b_R(0)}\Big|\psi(x)\Big|\Big|\frac{\partial \psi}{\partial n}(x)\bigg|_+\Big|d\sigma(x)\\
	&\leq\frac{C}{R^{2d-3}}\int_{\partial B^b_R(0)}d\sigma(x)=\frac{1}{R^{d-2}}.
	\end{aligned}
\end{equation*}  
Passing to the limit $R\to+\infty$, we find
\begin{equation}\label{integral of exterior}
	\int_{\partial C} \psi(x)\frac{\partial \psi}{\partial n}\bigg|_+d\sigma(x)
		=-\int_{\mathbb{R}^d_-\setminus \overline{C}}\big|\nabla \psi(x)\big|^2dx.
\end{equation}
Plugging \eqref{integral of interior} and \eqref{integral of exterior} into \eqref{second equation eigenvalue problem},
we infer the identity
\begin{equation*}
	\left(\lambda-\frac{1}{2}\right)\int_{\mathbb{R}^d_-\setminus C}\big|\nabla \psi(x)\big|^2dx
	+\left(\lambda+\frac{1}{2}\right)\int_{C}\big|\nabla \psi(x)\big|^2dx=0,
\end{equation*}
that is
\begin{equation*}
	(A+B)\lambda=\frac{1}{2}(A-B)
\end{equation*}
with
\begin{equation*}
	A:=\int_{\mathbb{R}^d_-\setminus C}\big|\nabla \psi(x)\big|^2dx
	\qquad\textrm{and}\qquad
	B:=\int_{C}\big|\nabla \psi(x)\big|^2dx.
\end{equation*}
The coefficient of $\lambda$ is non-zero.
On the contrary,  if $A+B=0$ then $\nabla \psi=0$ in $\mathbb{R}^d_-$ which means that $\psi\equiv 0$,
hence, from the second equation in \eqref{eigenfunctions in terms of W}, we  get $\varphi=0$ in $\partial C$.

Therefore, solving with respect to $\lambda$, we finally get
\begin{equation}\label{deco lambda}
	\lambda=\frac{1}{2}\cdot\frac{A-B}{A+B}\in\left[-\frac{1}{2},\frac{1}{2}\right].
\end{equation}
The value $\lambda=-1/2$ is not an eigenvalue for the operator $K_C^*+\widetilde{D}_C^*$.
Indeed, in such a case, we would have
\begin{equation*}
	A=\int_{\mathbb{R}^d_-\setminus C}\big|\nabla \psi(x)\big|^2\,dx=0,
\end{equation*} 
and thus $\psi=0$ in $\mathbb{R}^d_-\setminus C$. 
By definition of $\psi$, we deduce that $\psi=0$
on $\partial C$ and since  $\psi$ is harmonic in $C$, we get that $\psi=0$ also in $C$.
As before, by \eqref{eigenfunctions in terms of W}, this would imply that $\varphi=0$ in $\partial C$.
\end{proof}

For completeness, let us observe that the value $\lambda=1/2$ is an eigenvalue with geometric multiplicity equal to one.
Indeed, identity \eqref{deco lambda} implies that, for such value of $\lambda$, 
\begin{equation*}
	B=\int_{C}\big|\nabla \psi(x)\big|^2 dx=0,
\end{equation*}
hence $\psi$ is constant in $C$. Normalizing $\psi=1$ in $C$, the function $\psi$ in $\mathbb{R}^d_-\setminus C$ is given by the restriction of the solution $U$ to the Dirichlet problem in the exterior domain $\mathbb{R}^d\setminus \left(C\cup \widetilde{C}\right)$ with boundary data equal to 1. Then, by the second equation in \eqref{eigenfunctions in terms of W}, the function $\varphi$ is the normal derivative of $U$ at $\partial C$.

\section{Asymptotic expansion}\label{section asymptotic expansion}

In this Section, we derive an asymptotic formula for the solution of  problem \eqref{direct problem} when the cavity $C$
is small compared to the distance from the half-space $\mathbb{R}^{d-1}$.
For the reader's convenience, we recall that the cavity $C$ has the structure
\begin{equation*}
	C_{\varepsilon}:=C=z+\varepsilon B
\end{equation*}
where $B$ is a bounded Lipschitz set containing the origin. Moreover, we assume that
\begin{equation}\label{distance from half-space}
\textrm{dist}(z,\mathbb{R}^{d-1})\geq \delta_0>0
\end{equation}
otherwise, for the application we have in mind, the problem does not have a real physical meaning.
To emphasize the dependence of the solution to the direct problem by the parameter $\varepsilon$ we denote it by $u_{\varepsilon}$.
For brevity, we denote the layer potentials relative to $C_{\varepsilon}$ by the index $\varepsilon$, {\it viz.}
\begin{equation*}
	S_{\varepsilon}=S_{C_{\varepsilon}},\quad
	D_{\varepsilon}=D_{C_{\varepsilon}},\quad
	\widetilde{S}_{\varepsilon}=\widetilde{S}_{C_{\varepsilon}},\quad
	\widetilde{D}_{\varepsilon}=\widetilde{D}_{C_{\varepsilon}},\quad
	K_{\varepsilon}=K_{C_{\varepsilon}}
\end{equation*}
and the trace of the solution $u_{\varepsilon}$ on $\partial C_{\varepsilon}$ by $f_{\varepsilon}$. In this way the representation formula \eqref{representation formula} reads as
\begin{equation*}
	u_{\varepsilon}=S_{\varepsilon}g-D_{\varepsilon}f_{\varepsilon}-\widetilde{D}_{\varepsilon}f_{\varepsilon}+\widetilde{S}_{\varepsilon}g.
\end{equation*}
At $x\in\mathbb{R}^{d-1}$, taking into account that $x=\widetilde{x}$, it follows that
\begin{equation*}
	\begin{aligned}
	S_{\varepsilon}g(x)&=\int_{\partial C_{\varepsilon}}\varGamma(x-y)g(y)\,d\sigma(y)
		=\int_{\partial C_{\varepsilon}}\varGamma(\widetilde{x}-y)g(y)\,d\sigma(y)
		=\widetilde{S}_{\varepsilon}g(x)\\
	D_{\varepsilon}f_{\varepsilon}(x)&=\int_{\partial C_{\varepsilon}}\frac{\partial}{\partial n_y}\varGamma(x-y)f_{\varepsilon}(y)\,d\sigma(y)
		=\int_{\partial C_{\varepsilon}}\frac{\partial}{\partial n_y}\varGamma(\widetilde{x}-y)f_{\varepsilon}(y)\,d\sigma(y)
		=\widetilde{D}_{\varepsilon}f_{\varepsilon}(x)
	\end{aligned}
\end{equation*}
Hence, we obtain the equality
\begin{equation*}
	\tfrac12\,u_{\varepsilon}(x)=S_{\varepsilon}g(x)-D_{\varepsilon}f_{\varepsilon}(x),
	\qquad  x\in\mathbb{R}^{d-1}.
\end{equation*} 
Associating with the relation at the boundary $\partial C_{\varepsilon}$ and by \eqref{theorem operator spectrum}]
\begin{equation*}
	f_{\varepsilon}(x)=\left(\tfrac{1}{2}I+K_{\varepsilon}+\widetilde{D}_{\varepsilon}\right)^{-1}
		\left(S_{\varepsilon}g+\widetilde{S}_{\varepsilon}g\right)(x),
		\qquad x\in\partial C_{\varepsilon},
\end{equation*}
we get the identity
\begin{equation}\label{representation formula for asymptotic expansion}
	\tfrac{1}{2}u_{\varepsilon}(x)=J_1(x)+J_2(x),\qquad x\in\mathbb{R}^{d-1},
\end{equation}
where
\begin{equation*}
	\begin{aligned}
	J_1(x)&:=\int_{\partial C_{\varepsilon}}\varGamma(x-y)g(y)\,d\sigma(y),\\
	J_2(x)&:=-\int_{\partial C_{\varepsilon}}\frac{\partial}{\partial n_y}\varGamma(x-y)\left(\tfrac{1}{2}I+K_{\varepsilon}
		+\widetilde{D}_{\varepsilon}\right)^{-1}\left(S_{\varepsilon}g+\widetilde{S}_{\varepsilon}g\right)(y)\,d\sigma(y).\\
	\end{aligned}
\end{equation*}
Analyzing in details the dependence with respect to $\varepsilon$ of such relation, we obtain an explicit expression
for the first two terms in the asymptotic expansion of $u_{\varepsilon}$ at $\mathbb{R}^{d-1}$.

In what follows, for any fixed value of $\varepsilon>0$, given $h\,:\,\partial C_\varepsilon\to\mathbb{R}$,
we introduce the function $\widehat{h}\,:\,\partial B\to\mathbb{R}$ defined by
\begin{equation*}
	\widehat{h}(\zeta;\varepsilon):=h(z+\varepsilon\,\zeta),\qquad \zeta\in \partial B.
\end{equation*} 
This definition is useful to consider integrals over a set that is independent on $\varepsilon$.

\begin{theorem} \label{theorem asymptotic expansion}
Let us assume \eqref{distance from half-space}. For any $\varepsilon>0$, let $g\in L^2(\partial C_\varepsilon)$ such that $\widehat{g}$ is independent on $\varepsilon$.
Then, at any $x\in \mathbb{R}^{d-1}$, the following expansion holds
\begin{equation}\label{asymptotic expansion}
	\begin{aligned}
	u_{\varepsilon}(x)
	&=2\varepsilon^{d-1}\varGamma(x-z)\int_{\partial B}\widehat{g}(\zeta)d\sigma(\zeta)\\
	&+2\varepsilon^d\nabla\varGamma(x-z)\cdot\int_{\partial B}\left\{n_{\zeta}\left(\tfrac{1}{2}I+K_B\right)^{-1}S_B\widehat{g}(\zeta)
		-\zeta\widehat{g}(\zeta)\right\}d\sigma(\zeta)+O(\varepsilon^{d+1}),
	\end{aligned}
\end{equation}
where $O(\varepsilon^{d+1})$ denotes a quantity uniformly bounded by $C\varepsilon^{d+1}$ with $C=C(\delta_0)$ which tends to infinity when $\delta_0$ goes to zero. 
\end{theorem}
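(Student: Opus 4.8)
\emph{Strategy.} The plan is to start from the decomposition $\tfrac12 u_\varepsilon(x)=J_1(x)+J_2(x)$ in \eqref{representation formula for asymptotic expansion} and to expand $J_1$ and $J_2$ separately after rescaling every integral onto the fixed boundary $\partial B$ through the substitution $y=z+\varepsilon\zeta$, for which $d\sigma(y)=\varepsilon^{d-1}d\sigma(\zeta)$ and the hatted functions $\widehat h(\zeta)=h(z+\varepsilon\zeta)$ become the natural unknowns (note that $h\mapsto\widehat h$ is, up to the factor $\varepsilon^{(d-1)/2}$, an isometry $L^2(\partial C_\varepsilon)\to L^2(\partial B)$, so operator norms transfer). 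The key preliminary fact, which I would isolate as a short lemma, is how the four operators rescale: since $\varGamma$ is positively homogeneous of degree $2-d$ and the double–layer kernel of degree $1-d$, one gets $\widehat{S_\varepsilon g}=\varepsilon\,S_B\widehat g$ and $\widehat{K_\varepsilon\varphi}=K_B\widehat\varphi$ (scale invariance of $K$); whereas the image operators $\widetilde S_\varepsilon,\widetilde D_\varepsilon$ behave \emph{differently}, because their kernels are evaluated at pairs of points whose separation stays $\ge 2\,\mathrm{dist}(z,\mathbb{R}^{d-1})-O(\varepsilon)\ge\delta_0$, hence are smooth and bounded by $C(\delta_0)$. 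This yields $\widehat{\widetilde S_\varepsilon g}=O(\varepsilon^{d-1})$ and, viewing $\widehat{\widetilde D}_\varepsilon\widehat\varphi:=\widehat{\widetilde D_\varepsilon\varphi}$ as an operator on $L^2(\partial B)$, the bound $\|\widehat{\widetilde D}_\varepsilon\|_{L^2\to L^2}\le C(\delta_0)\,\varepsilon^{d-1}$ (estimate the nonsingular kernel by $\delta_0^{1-d}/\omega_d$ and use $|\partial C_\varepsilon|=\varepsilon^{d-1}|\partial B|$). Since $d\ge3$, all image contributions are $O(\varepsilon^{d-1})\le O(\varepsilon^2)$, hence negligible at the orders we track.

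\emph{Expansion of $J_1$.} After rescaling, $J_1(x)=\varepsilon^{d-1}\int_{\partial B}\varGamma(x-z-\varepsilon\zeta)\,\widehat g(\zeta)\,d\sigma(\zeta)$. For $x\in\mathbb{R}^{d-1}$ one has $|x-z|\ge\delta_0$, so for $\varepsilon$ small the argument $x-z-\varepsilon\zeta$ stays bounded away from the origin and Taylor's formula with remainder controlled by $\sup|D^2\varGamma|\le C(\delta_0)$ gives $\varGamma(x-z-\varepsilon\zeta)=\varGamma(x-z)-\varepsilon\,\nabla\varGamma(x-z)\cdot\zeta+O(\varepsilon^2)$, whence
$$J_1(x)=\varepsilon^{d-1}\varGamma(x-z)\int_{\partial B}\widehat g(\zeta)\,d\sigma(\zeta)-\varepsilon^{d}\nabla\varGamma(x-z)\cdot\int_{\partial B}\zeta\,\widehat g(\zeta)\,d\sigma(\zeta)+O(\varepsilon^{d+1}).$$

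\emph{Expansion of $J_2$ and conclusion.} Write the density as $\phi_\varepsilon:=\bigl(\tfrac12 I+K_\varepsilon+\widetilde D_\varepsilon\bigr)^{-1}\bigl(S_\varepsilon g+\widetilde S_\varepsilon g\bigr)$, so that $J_2(x)=-\int_{\partial C_\varepsilon}\tfrac{\partial}{\partial n_y}\varGamma(x-y)\,\phi_\varepsilon(y)\,d\sigma(y)$. In hatted variables $\widehat\phi_\varepsilon$ solves $\bigl(\tfrac12 I+K_B+\widehat{\widetilde D}_\varepsilon\bigr)\widehat\phi_\varepsilon=\widehat{S_\varepsilon g}+\widehat{\widetilde S_\varepsilon g}=\varepsilon\,S_B\widehat g+O(\varepsilon^{d-1})$. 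Since $\tfrac12 I+K_B$ is invertible on $L^2(\partial B)$ (by the classical bound $\sigma(K_B^*)\subset(-1/2,1/2]$ for Lipschitz domains, recalled before Theorem~\ref{theorem operator spectrum}) and $\|\widehat{\widetilde D}_\varepsilon\|=O(\varepsilon^{d-1})\to0$, a Neumann–series argument gives $\bigl(\tfrac12 I+K_B+\widehat{\widetilde D}_\varepsilon\bigr)^{-1}=\bigl(\tfrac12 I+K_B\bigr)^{-1}+O(\varepsilon^{d-1})$ in operator norm, hence $\widehat\phi_\varepsilon=\varepsilon\bigl(\tfrac12 I+K_B\bigr)^{-1}S_B\widehat g+O(\varepsilon^{2})$. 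Rescaling $J_2$, using $n_{z+\varepsilon\zeta}=n_\zeta$ and $\tfrac{\partial}{\partial n_y}\varGamma(x-y)\big|_{y=z+\varepsilon\zeta}=-\,n_\zeta\cdot\nabla\varGamma(x-z)+O(\varepsilon)$ (again since $|x-z|\ge\delta_0$), one obtains $J_2(x)=\varepsilon^{d}\nabla\varGamma(x-z)\cdot\int_{\partial B}n_\zeta\bigl(\tfrac12 I+K_B\bigr)^{-1}S_B\widehat g(\zeta)\,d\sigma(\zeta)+O(\varepsilon^{d+1})$. Adding the two expansions and multiplying by $2$ yields exactly \eqref{asymptotic expansion}, with all error constants depending on $\delta_0$ only through $\sup|D^2\varGamma|\le C(\delta_0)$ and $\|\widehat{\widetilde D}_\varepsilon\|\le C(\delta_0)\varepsilon^{d-1}$, which blow up as $\delta_0\to0$.

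\emph{Main obstacle.} The delicate steps are, first, getting the operator rescalings exactly right — in particular recognizing that the image operators do \emph{not} simply rescale by a power of $\varepsilon$ but instead contribute at the higher order $\varepsilon^{d-1}$, precisely because their kernels are nonsingular at distance $\gtrsim\delta_0$ — and, second, justifying the perturbative inversion of $\tfrac12 I+K_\varepsilon+\widetilde D_\varepsilon$ uniformly in $\varepsilon$, which requires both the $\varepsilon$-independent invertibility of $\tfrac12 I+K_B$ and the quantitative smallness of $\widehat{\widetilde D}_\varepsilon$. Everything else is careful bookkeeping of Taylor remainders while tracking the dependence of the constants on $\delta_0$.
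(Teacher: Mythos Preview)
Your proposal is correct and follows the same overall architecture as the paper: expand $J_1$ by Taylor-expanding $\varGamma(x-z-\varepsilon\zeta)$, and expand $J_2$ by first obtaining an expansion of the density $\widehat\phi_\varepsilon$ (the paper isolates this as a separate Lemma yielding $\widehat\phi_\varepsilon=\varepsilon\bigl(\tfrac12 I+K_B\bigr)^{-1}S_B\widehat g+O(\varepsilon^{d-1})$, exactly your formula) and then Taylor-expanding the double-layer kernel.

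The one genuine difference is how you handle the inverse operator. The paper invokes Theorem~\ref{theorem operator spectrum} to locate $\sigma(K_\varepsilon+\widetilde D_\varepsilon)\subset(-1/2+\delta,1/2]$, writes $(I-A_\varepsilon)^{-1}$ as a Neumann series in $A_\varepsilon=\tfrac12 I-K_\varepsilon-\widetilde D_\varepsilon$, and then expands each power $A_\varepsilon^h=(\tfrac12 I-K_B)^h-\varepsilon^{d-1}E_{h,\varepsilon}$, summing the remainders $E_{h,\varepsilon}$ via a spectral-radius estimate. You instead pass to hatted variables \emph{first}, so that the operator becomes $\tfrac12 I+K_B+\widehat{\widetilde D}_\varepsilon$ with $\|\widehat{\widetilde D}_\varepsilon\|=O(\varepsilon^{d-1})$, and invert it as a small perturbation of the $\varepsilon$-independent invertible operator $\tfrac12 I+K_B$. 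Your route is shorter, makes the $\varepsilon$-uniformity of the constants transparent, and in fact does not require Theorem~\ref{theorem operator spectrum} at all for this step (the classical result $\sigma(K_B^*)\subset(-1/2,1/2]$ suffices); the paper's route, on the other hand, exploits the spectral analysis of Section~\ref{section spectral analysis} that was developed for the general representation formula and so ties the argument more closely to the preceding sections.
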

To prove the theorem we first show the following expansion for the operator $\left(\tfrac{1}{2}I+K_{\varepsilon}+\widetilde{D}_{\varepsilon}\right)^{-1}$

\begin{lemma}
We have
\begin{equation}\label{some expansion}
	\left(\tfrac{1}{2}I+K_{\varepsilon}
	+\widetilde{D}_{\varepsilon}\right)^{-1}\left(S_{\varepsilon}g+\widetilde{S}_{\varepsilon}g\right)(z+\varepsilon \zeta)
	=\varepsilon\left(\tfrac{1}{2}I+K_B\right)^{-1}S_B\widehat{g}(\zeta)+O(\varepsilon^{d-1})
\end{equation}
\end{lemma}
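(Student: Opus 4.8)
The plan is to pass to the rescaled variable $\zeta\in\partial B$ via $y=z+\varepsilon\zeta$ and to use two facts: the layer potentials $S_\varepsilon$ and $K_\varepsilon$ \emph{rescale exactly}, while the image potentials $\widetilde S_{\varepsilon}$ and $\widetilde D_{\varepsilon}$, whose kernels stay smooth and bounded on $\partial C_\varepsilon$ thanks to the separation $\mathrm{dist}(z,\mathbb{R}^{d-1})\ge\delta_0$ in \eqref{distance from half-space}, contribute only at order $\varepsilon^{d-1}$. All constants below are allowed to depend on $\delta_0$ and to blow up as $\delta_0\to0$.

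First I would record the scaling identities. Since $\partial C_\varepsilon=z+\varepsilon\,\partial B$ with $n_{z+\varepsilon\eta}=n_\eta$ and $d\sigma=\varepsilon^{d-1}d\sigma(\eta)$, and $\varGamma(\varepsilon w)=\varepsilon^{2-d}\varGamma(w)$, a direct change of variables gives, for every $\varphi\in L^2(\partial C_\varepsilon)$,
\[
	S_\varepsilon g(z+\varepsilon\zeta)=\varepsilon\,S_B\widehat g(\zeta),
	\qquad
	K_\varepsilon\varphi(z+\varepsilon\zeta)=K_B\widehat\varphi(\zeta),
	\qquad\zeta\in\partial B .
\]
In particular, after rescaling, the operator $\tfrac12 I+K_\varepsilon$ becomes \emph{exactly} $\tfrac12 I+K_B$ on $L^2(\partial B)$, which is boundedly invertible for bounded Lipschitz domains (see \cite{Ammari-Kang,Escauriaza-Fabes-Verchota}).

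Next I would estimate the image terms. For $x=z+\varepsilon\zeta$ one has $(\widetilde x)_d\ge\delta_0-C\varepsilon$ while $y_d\le-\delta_0+C\varepsilon$ for $y\in\partial C_\varepsilon$, so $|\widetilde x-y|\ge\delta_0$ once $\varepsilon$ is small enough; hence $\varGamma(\widetilde x-y)$ and $\partial_{n_y}\varGamma(\widetilde x-y)$ are bounded by $C(\delta_0)$ uniformly in $\zeta,y$. Combined with $d\sigma=\varepsilon^{d-1}d\sigma(\eta)$, this yields, uniformly in $\zeta\in\partial B$,
\[
	\bigl\|\widetilde S_{\varepsilon} g(z+\varepsilon\,\cdot)\bigr\|_{L^2(\partial B)}\le C(\delta_0)\,\varepsilon^{d-1}\|\widehat g\|_{L^2(\partial B)},
	\qquad
	\bigl\|\widetilde D_{\varepsilon}\varphi(z+\varepsilon\,\cdot)\bigr\|_{L^2(\partial B)}\le C(\delta_0)\,\varepsilon^{d-1}\|\widehat\varphi\|_{L^2(\partial B)},
\]
so that $\widehat\varphi\mapsto\widetilde D_{\varepsilon}\varphi(z+\varepsilon\,\cdot)$ is an operator of the form $\varepsilon^{d-1}T_\varepsilon$ with $\|T_\varepsilon\|$ bounded, and $\widetilde S_{\varepsilon}g(z+\varepsilon\,\cdot)=\varepsilon^{d-1}q_\varepsilon$ with $\|q_\varepsilon\|_{L^2(\partial B)}$ bounded, uniformly in $\varepsilon\in(0,\varepsilon_0]$.

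Finally I would rescale the boundary integral equation satisfied by the trace, namely $\bigl(\tfrac12 I+K_\varepsilon+\widetilde D_{\varepsilon}\bigr)f_\varepsilon=S_\varepsilon g+\widetilde S_{\varepsilon}g$ on $\partial C_\varepsilon$; evaluating at $z+\varepsilon\zeta$ and using the two previous steps turns it into
\[
	\bigl(\tfrac12 I+K_B+\varepsilon^{d-1}T_\varepsilon\bigr)\widehat{f_\varepsilon}=\varepsilon\,S_B\widehat g+\varepsilon^{d-1}q_\varepsilon\qquad\text{on }\partial B .
\]
Since $\tfrac12 I+K_B$ is boundedly invertible and $\varepsilon^{d-1}\|T_\varepsilon\|\to0$, a Neumann series gives $\bigl(\tfrac12 I+K_B+\varepsilon^{d-1}T_\varepsilon\bigr)^{-1}=\bigl(\tfrac12 I+K_B\bigr)^{-1}+O(\varepsilon^{d-1})$ in operator norm for $\varepsilon$ small; applying this to the right-hand side and using $d\ge3$ (so that $\varepsilon^d$ and $\varepsilon^{2(d-1)}$ are $O(\varepsilon^{d-1})$) yields
\[
	\widehat{f_\varepsilon}(\zeta)=\varepsilon\bigl(\tfrac12 I+K_B\bigr)^{-1}S_B\widehat g(\zeta)+O(\varepsilon^{d-1})\quad\text{in }L^2(\partial B),
\]
which is \eqref{some expansion}, because the left-hand side of \eqref{some expansion} is exactly $f_\varepsilon(z+\varepsilon\zeta)=\widehat{f_\varepsilon}(\zeta)$.

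The hard part will be the separation-of-scales bookkeeping: one must recognise that the rescaling of $K_\varepsilon$ is an \emph{exact} identity (producing no remainder), whereas that of $\widetilde D_{\varepsilon}$ and $\widetilde S_{\varepsilon}$ produces a genuine $\varepsilon^{d-1}$ gain, and one must keep every estimate uniform in $\zeta\in\partial B$ with constants whose dependence on $\delta_0$ is tracked — this is what eventually produces the $O(\varepsilon^{d+1})$ remainder of Theorem \ref{theorem asymptotic expansion}. The perturbative inversion in the last step is routine.
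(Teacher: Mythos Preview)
Your argument is correct and shares with the paper the same scaling computations: $K_\varepsilon$ and $S_\varepsilon$ rescale exactly to $K_B$ and $\varepsilon S_B$, while the image operators $\widetilde D_\varepsilon$ and $\widetilde S_\varepsilon$ contribute uniformly bounded terms of size $\varepsilon^{d-1}$ thanks to the separation $|\widetilde x-y|\gtrsim\delta_0$.

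The one genuine difference is in the inversion step. The paper invokes the spectral result of Theorem~\ref{theorem operator spectrum} for the \emph{full} operator $K_\varepsilon+\widetilde D_\varepsilon$, writes $(\tfrac12 I+K_\varepsilon+\widetilde D_\varepsilon)^{-1}=\sum_{h\ge0}A_\varepsilon^h$ with $A_\varepsilon=\tfrac12 I-K_\varepsilon-\widetilde D_\varepsilon$ (whose spectral radius is strictly less than $1$), expands each power as $A_\varepsilon^h=(\tfrac12 I-K_B)^h-\varepsilon^{d-1}E_{h,\varepsilon}$, and then must control $\sum_h\|E_{h,\varepsilon}\|$ via a somewhat delicate estimate using $\|A_\varepsilon^{h_0}\|<1$. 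You instead rescale \emph{first} to obtain $\tfrac12 I+K_B+\varepsilon^{d-1}T_\varepsilon$ on $L^2(\partial B)$ and then perturb around the classically invertible operator $\tfrac12 I+K_B$, which is the textbook ``small perturbation of an invertible operator'' argument. Your route is shorter and, for this lemma, does not actually require the spectral analysis of Section~\ref{section spectral analysis}: only the standard invertibility of $\tfrac12 I+K_B$ on a bounded Lipschitz domain is needed. The paper's route, on the other hand, makes explicit that the inverse exists for \emph{every} $\varepsilon$ (not just small ones), which is consistent with its having already established Theorem~\ref{theorem operator spectrum}.
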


\begin{proof}
We analyse, separetely, the terms $\left(\tfrac{1}{2}I+K_{\varepsilon}+\widetilde{D}_{\varepsilon}\right)^{-1}$ and $S_{\varepsilon}+\widetilde{S}_{\varepsilon}$, collecting, at the very end, the corresponding expansions.

Since $K_{\varepsilon}+\widetilde{D}_{\varepsilon}$ is compact and its spectrum is contained in $(-1/2,1/2]$ there exists $\delta>0$ such that
\begin{equation*}
\sigma\left(K_{\varepsilon}+\widetilde{D}_{\varepsilon}\right)\subset (-1/2+\delta,1/2].
\end{equation*} 
Then, the operator 
\begin{equation*}
A_{\varepsilon}:=\tfrac{1}{2}I-K_{\varepsilon}-\widetilde{D}_{\varepsilon}
\end{equation*}
is such that $\sigma\left(A_{\varepsilon}\right)\subset [0, 1-\delta)$ and thus has spectral radius strictly smaller than 1.
As a consequence, taking the powers of the operator $A_{\varepsilon}$ one finds
\begin{equation}\label{estimates A}
\|A^h_{\varepsilon}\|\leq 1\quad \forall h\quad\textrm{and}\quad \|A^{h_0}_{\varepsilon}\|< 1 \quad \textrm{for some $h_0$}.
\end{equation}
The inverse operator of  $I-A_{\varepsilon}=\frac{1}{2}I+K_{\varepsilon}+\widetilde{D}_{\varepsilon}$ can be represented by the Neumann series that is
\begin{equation*}
	\left(I-A_{\varepsilon}\right)^{-1}=\sum_{h=0}^{+\infty}A^h_{\varepsilon}=\sum_{h=0}^{+\infty}\left(\tfrac{1}{2}I-K_{\varepsilon}-\widetilde{D}_{\varepsilon}\right)^h.
\end{equation*}
At the point $z+\varepsilon \zeta$, we obtain
\begin{equation*}
	\begin{aligned}
	&\left(K_{\varepsilon}+\widetilde{D}_{\varepsilon}\right)\varphi(z+\varepsilon \zeta)\\
	&\qquad=\frac{1}{\omega_d}\textrm{p.v.}\int_{\partial C_{\varepsilon}}
		\frac{(y-z-\varepsilon\zeta)\cdot n_y}{|z+\varepsilon\zeta-y|^d}\varphi(y)\,d\sigma(y)
		+\int_{\partial C_{\varepsilon}}\frac{\partial}{\partial n_y} \varGamma(\tilde z+\varepsilon\tilde \zeta-y)\varphi(y)\,d\sigma(y)\\
	&\qquad =\frac{1}{\omega_d}\textrm{p.v.}\int_{\partial B}\frac{(\eta-\zeta)\cdot n_{\eta}}{|\zeta-\eta|^d}\widehat{\varphi}(\eta)\,d\sigma(\eta)
		+ \varepsilon^{d-1}\int_{\partial B}
		\frac{\partial}{\partial n_{\eta}}\varGamma(\widetilde{z}+\varepsilon\widetilde{\zeta}-z-\varepsilon\eta)
		\hat \varphi(\eta)\,d\sigma(\eta) \\
	&\qquad =K_{B}\widehat{\varphi}(\zeta)
		+\varepsilon^{d-1}R_{\varepsilon}\widehat{\varphi}(\zeta),	\end{aligned}
\end{equation*}
where 
\begin{equation*}
	R_{\varepsilon}\widehat{\varphi}(\zeta):=\int_{\partial B}\frac{\partial}{\partial n_{\eta}}
		\varGamma\left(\widetilde{z}-z+\varepsilon(\widetilde{\zeta}-\eta)\right)\widehat{\varphi}(\eta)d\sigma(\eta)
\end{equation*}
is uniformly bounded in $\varepsilon$. Using these results, we calculate $A_{\varepsilon}^h$ highlighting the term that do not contain $\varepsilon$ and the one of order $d-1$, that is
\begin{equation*}
A_{\varepsilon}^h=\left(\tfrac{1}{2}I-K_B\right)^h- \varepsilon^{d-1}E_{h,\varepsilon}
\end{equation*}
where
\begin{equation*}
E_{h,\varepsilon}=\sum_{j=1}^{h}A_{\varepsilon}\cdots A_{\varepsilon}\underbrace{R_{\varepsilon}}_{j-th}A_{\varepsilon}\cdots A_{\varepsilon}.
\end{equation*}
For $h_0$ as in \eqref{estimates A} and $h>h_0$ we have
\begin{equation*}
\|E_{h,\varepsilon}\|\leq \|R_{\varepsilon}\| \|A_{\varepsilon}\|^{2h_0} \|A_{\varepsilon}^{h_0}\|^{\left[h/h_0\right]-1}\leq  \|R_{\varepsilon}\| \|A_{\varepsilon}\|^{2h_0} \|A_{\varepsilon}^{h_0}\|^{h/h_0-1}, 
\end{equation*}
where $[\,\cdot\,]$ denotes the integer part, and thus
\begin{equation*}
\sum_{h=0}^{+\infty}\|E_{h,\varepsilon}\|\leq C \sum_{h=0}^{+\infty}\|A_{\varepsilon}^{h_0}\|^{h/h_0}
\end{equation*} 
giving the absolute convergence of $\sum E_{h,\varepsilon}$.
Summarizing we conclude that
\begin{equation}\label{exp inverse operator}
\left(I-A_{\varepsilon}\right)^{-1}=\left(\tfrac{1}{2}I+K_B\right)^{-1}+O(\varepsilon^{d-1}).
\end{equation} 
Let us evaluate the term $S_{\varepsilon}+\widetilde{S}_{\varepsilon}$.
We have
\begin{equation*}
\begin{aligned}
S_{\varepsilon}g(z+\varepsilon\zeta)&=\int_{\partial C_{\varepsilon}}\varGamma(z+\varepsilon\zeta-y)g(y)d\sigma(y)\\
&=\varepsilon\int_{\partial B}\varGamma(\zeta-\theta)\widehat{g}(\theta)d\sigma(\theta)=\varepsilon S_{B}\widehat{g}(\zeta)
\end{aligned}
\end{equation*}
and
\begin{equation*}
\begin{aligned}
\widetilde{S}_{\varepsilon}g(z+\varepsilon\zeta)&=\int_{\partial C_{\varepsilon}}\varGamma\left(\widetilde{z}+\varepsilon\widetilde{\zeta}-y\right)g(y)d\sigma(y)\\
&=\varepsilon^{d-1}\int_{\partial B}\varGamma\left(\widetilde{z}-z+\varepsilon(\widetilde{\zeta}-\theta)\right)\widehat{g}(\theta)d\sigma(\theta)\\
&=\varepsilon^{d-1}\varGamma(\widetilde{z}-z)\int_{\partial B}\widehat{g}(\theta)d\sigma(\theta)+O(\varepsilon^d)
\end{aligned}
\end{equation*}
where we have used the zero order expansion for $\varGamma$.

Collecting we infer 
\begin{equation*}
\left(S_{\varepsilon}g+\widetilde{S}_{\varepsilon}g\right)(z+\varepsilon \zeta)=\varepsilon S_B\widehat{g}(\zeta)+O(\varepsilon^{d-1})
\end{equation*}
and combining with \eqref{exp inverse operator} we obtain the conclusion.
\end{proof}

\begin{proof}[Proof of Theorem \ref{theorem asymptotic expansion}]
To prove  \eqref{asymptotic expansion}, we analyse the two integrals $J_1$ and $J_2$
in \eqref{representation formula for asymptotic expansion}.

For $x,\zeta\in\mathbb{R}^{d}$ with $x\neq 0$ and $\varepsilon$ sufficiently small, we have
\begin{equation*}
	\varGamma(x-\varepsilon\zeta)=\varGamma(x)-\varepsilon\,\nabla\varGamma(x)\cdot\zeta+O(\varepsilon^2).
\end{equation*}
%\comment{Corrado: attenzione a $O$. In quale insieme \`e uniforme la stima?}\\
Hence, for $x\in\mathbb{R}^{d-1}$, we get
\begin{equation}\label{expansion of S}
	\begin{aligned}
	J_1&=\varepsilon^{d-1}\int_{\partial B}\varGamma(x-z-\varepsilon\zeta)\,\widehat{g}(\zeta)\,d\sigma(\zeta)\\
	&=\varepsilon^{d-1}\varGamma(x-z)\int_{\partial B}\widehat{g}(\zeta)\,d\sigma(\zeta)
		-\varepsilon^{d}\nabla\varGamma(x-z)\cdot\int_{\partial B}\zeta\,\widehat{g}(\zeta)\,d\sigma(\zeta)
		+O(\varepsilon^{d+1}).
	\end{aligned}
\end{equation}
Next we consider the second integral
in \eqref{representation formula for asymptotic expansion}, written as
\begin{equation*}
	J_2=-\varepsilon^{d-1}\int_{\partial B}\frac{\partial}{\partial n_{\zeta}}\varGamma(x-z-\varepsilon\zeta)
		\widehat{h}_{\varepsilon}(\zeta)\,d\sigma(\zeta),
\end{equation*}
where the function $\widehat{h}_{\varepsilon}$ is given by
\begin{equation}\label{operator inside integral for asymptotic expansion}
	\widehat{h}_{\varepsilon}(\zeta)=\left(\tfrac{1}{2}I+K_{\varepsilon}
			+\widetilde{D}_{\varepsilon}\right)^{-1}\left(S_{\varepsilon}g+\widetilde{S}_{\varepsilon}g\right)(z+\varepsilon \zeta)
\end{equation}
For $x,\zeta\in\mathbb{R}^{d}$ with $x\neq 0$ and $\varepsilon$ sufficiently small, there holds
\begin{equation}\label{taylor expansion gradient fundamental solution}
	\nabla_x \varGamma(x+\varepsilon\zeta)=\nabla_x \varGamma(x)+O(\varepsilon),
\end{equation}
therefore, taking advantage of the expansion \eqref{some expansion},
\begin{equation*}
	\begin{aligned}
	J_2&=\varepsilon^{d-1}\int_{\partial B}\frac{\partial}{\partial n_{\zeta}}\varGamma(x-z)
		\widehat{h}_{\varepsilon}(\zeta)\,d\sigma(\zeta)+O(\varepsilon^{d})\\
	&=\varepsilon^{d}\int_{\partial B}\frac{\partial}{\partial n_{\zeta}}\varGamma(x-z)
		\left(\tfrac{1}{2}I+K_B\right)^{-1}S_B\widehat{g}(\zeta)\,d\sigma(\zeta)+O(\varepsilon^{d+1}).
	\end{aligned}
\end{equation*}
Collecting the expansions for $J_1$ and $J_2$, we deduce \eqref{asymptotic expansion}.
\end{proof}

We show that the term $\left(\frac{1}{2}I+K_B\right)^{-1}S_Bg(x)$, for $x\in\partial B$, represents the trace
of the solution of the external domain related to the set $B$ and with Neumann boundary condition given by $g$.
To this aim, we consider the problem
\begin{equation}\label{differential problem for U}
\Delta U=0\quad \text{in}\ \mathbb{R}^d\setminus B,\quad
\frac{\partial U}{\partial n}=g\quad  \text{on}\ \partial B,\quad
U\longrightarrow 0\quad  \text{as}\ |x|\rightarrow +\infty,
\end{equation}
where the cavity $B$ is such that $0\in B$.

\begin{proposition}\label{Proposition for trace}
Let us define $f(x):=U(x)\big|_{x\in\partial B}$, then
\begin{equation*}
\left(\frac{1}{2}I+K_B\right)^{-1}S_Bg(x)=f(x).
\end{equation*}
\end{proposition}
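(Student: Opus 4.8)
The plan is to read off the boundary integral equation satisfied by $f$ from a Green representation formula for \eqref{differential problem for U}, and then to invert the operator that appears. Since \eqref{differential problem for U} is posed on the genuine exterior domain $\mathbb{R}^d\setminus B$ — there is no half-space boundary here, hence no image contributions — I would first rerun the argument of Theorem \ref{theorem representation formula} with the fundamental solution $\varGamma$ in place of the Neumann function $N$: applying the second Green identity to $\varGamma(x-\cdot)$ and $U$ on $\bigl(B_R(0)\setminus B\bigr)\setminus B_\rho(x)$ and letting $R\to\infty$, $\rho\to0$ exactly as in that proof, the $\partial B_R(0)$ term vanishes by the decay \eqref{asympotic behaviour} of $U$, the $\partial B_\rho(x)$ term produces $U(x)$, and one is left with
\begin{equation*}
	U(x)=S_B g(x)-D_B f(x),\qquad x\in\mathbb{R}^d\setminus\overline B .
\end{equation*}

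Next I would take the trace of this identity on $\partial B$ from the exterior side. The single layer potential is continuous across $\partial B$, while by the jump relations \eqref{traces layer potentials} one has $D_B f\big|_+=\bigl(-\tfrac12 I+K_B\bigr)f$; since $f=U\big|_{\partial B}$ is precisely the exterior ($+$) trace of $U$, this gives on $\partial B$
\begin{equation*}
	f=S_B g-\bigl(-\tfrac12 I+K_B\bigr)f,
	\qquad\text{i.e.}\qquad
	\bigl(\tfrac12 I+K_B\bigr)f=S_B g ,
\end{equation*}
which is exactly the whole-space analogue (with trivial image terms) of the relation $\bigl(\tfrac12 I+K_C+\widetilde D_C\bigr)f=S_C g+\widetilde S_C g$ recorded right after Theorem \ref{theorem representation formula}.

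Finally I would invert $\tfrac12 I+K_B$ on $L^2(\partial B)$. The operator $K_B$ is compact, and its spectrum is the complex conjugate of $\sigma(K_B^*)$, which — by the classical Kellogg estimate \cite{Kellogg} recalled before Theorem \ref{theorem operator spectrum}, valid for Lipschitz domains thanks to \cite{Escauriaza-Fabes-Verchota} (see also \cite{Ammari-Kang}) — is contained in $(-1/2,1/2]$; in particular $-1/2\notin\sigma(K_B)$, so by the Fredholm alternative $\tfrac12 I+K_B$ is boundedly invertible. Hence $f=\bigl(\tfrac12 I+K_B\bigr)^{-1}S_B g$ on $\partial B$, as claimed.

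I do not expect a genuine obstacle: the argument is essentially bookkeeping, and the only points deserving care are the orientation conventions in the exterior Green identity (along $\partial B$ the outward normal of $\mathbb{R}^d\setminus B$ is $-n$) and the consistent use of the exterior ($+$) trace in the jump formula for $D_B$. An alternative would be to represent $U=S_B\psi$, solve $\bigl(\tfrac12 I+K_B^*\bigr)\psi=g$ as in the existence part of the well-posedness proof, and then pass to $f=S_B\psi=S_B\bigl(\tfrac12 I+K_B^*\bigr)^{-1}g$; this route, however, needs the symmetrization identity $S_B K_B^*=K_B S_B$, which is not among the facts recalled in the paper, so the representation-formula argument above is the one I would present.
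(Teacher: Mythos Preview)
Your proposal is correct and follows essentially the same route as the paper: derive the exterior representation $U=S_Bg-D_Bf$ via Green's identity with $\varGamma$, take the exterior trace using the jump relation $D_Bf\big|_+=(-\tfrac12 I+K_B)f$ to obtain $(\tfrac12 I+K_B)f=S_Bg$, and invert. The paper's proof is terser (it simply asserts the representation formula and applies the jump relations without spelling out the invertibility argument), while you add the justification that $-1/2\notin\sigma(K_B)$ and flag the orientation/trace-side bookkeeping, but the substance is identical.
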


\begin{proof}
As done in the proof of Theorem \ref{theorem representation formula}, that is, applying second Green's identity to the fundamental solution $\varGamma$ and $U$ in the domain $B_R(0)\setminus B$, with $R$ sufficiently large, it can be proven that the representation formula for $U$ is
\begin{equation}
U(x)=S_Bg(x)-D_Bf(x),\quad x\in\mathbb{R}^d\setminus B.
\end{equation}
Therefore, from single and double layer potentials properties
\begin{equation*}
f(x)=S_Bg(x)-\left(-\frac{1}{2}I+K_B\right)f(x),\quad x\in\partial B,
\end{equation*} 
hence
\begin{equation*}
f(x)=\left(\frac{1}{2}I+K_B\right)^{-1}S_Bg(x),\quad x\in\partial B,
\end{equation*}
that is the assertion.
\end{proof}

Now, we want to consider a specific case of the Neumann condition on the boundary of the cavity $C_\varepsilon$
so to get an explicit expression of the asymptotic expansion in terms of the polarization tensor and the fundamental solution.
\begin{corollary} 
Given $p\in\mathbb{R}^d$, let the boundary datum given by
\begin{equation*}
	g=-p\cdot n.
\end{equation*}
Then, the following expansion holds
\begin{equation}\label{simplified asymptotic expansion}
	u_{\varepsilon}(x)=2\,\varepsilon^d\nabla\varGamma(x-z)\cdot M p+O(\varepsilon^{d+1}),
	\qquad x\in\mathbb{R}^{d-1},
\end{equation}
where $M$ is the symmetric positive definite tensor given by 
\begin{equation}\label{pol tensor}
	M:=\int_{\partial B}n_\zeta \otimes \left(\zeta+\Psi(\zeta)\right)d\sigma(\zeta)
\end{equation}
and the auxiliary function $\Psi$ has components $\Psi_i$, $i=1,\dots,d$, solving
\begin{equation*}
	\Delta \Psi_i=0\quad\text{in}\;\mathbb{R}^d\setminus B,\qquad
	\frac{\partial \Psi_i}{\partial n}=-n_i\quad\text{on}\;\partial B,\qquad
	\Psi_i\longrightarrow 0\quad\text{as}\;|x|\rightarrow +\infty.
\end{equation*}
\end{corollary}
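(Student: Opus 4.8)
The plan is to specialize the asymptotic expansion \eqref{asymptotic expansion} of Theorem \ref{theorem asymptotic expansion} to the datum $g=-p\cdot n$ and to carry out the two integrals explicitly. First I would note that the unit normal is invariant under translation and dilation, so that $\widehat{g}(\zeta;\varepsilon)=g(z+\varepsilon\zeta)=-p\cdot n_\zeta$ for $\zeta\in\partial B$; in particular $\widehat{g}$ is independent of $\varepsilon$ and the hypotheses of Theorem \ref{theorem asymptotic expansion} hold. The leading $\varepsilon^{d-1}$ term then vanishes: by the divergence theorem $\int_{\partial B}\widehat{g}(\zeta)\,d\sigma(\zeta)=-p\cdot\int_{\partial B}n_\zeta\,d\sigma(\zeta)=0$, so $u_\varepsilon$ starts at order $\varepsilon^d$.

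Next I would compute the $\varepsilon^d$ coefficient. By Proposition \ref{Proposition for trace}, $(\tfrac12 I+K_B)^{-1}S_B\widehat{g}$ equals the trace on $\partial B$ of the solution $U$ of the exterior Neumann problem \eqref{differential problem for U} with datum $\widehat{g}=-p\cdot n=\sum_i p_i(-n_i)$; by linearity and uniqueness of that problem $U=p\cdot\Psi$, hence $(\tfrac12 I+K_B)^{-1}S_B\widehat{g}(\zeta)=p\cdot\Psi(\zeta)$ on $\partial B$. Substituting this together with $\widehat{g}(\zeta)=-p\cdot n_\zeta$ into \eqref{asymptotic expansion}, the order-$\varepsilon^d$ term becomes $2\varepsilon^d\nabla\varGamma(x-z)\cdot\int_{\partial B}\bigl\{n_\zeta\,(p\cdot\Psi(\zeta))+\zeta\,(p\cdot n_\zeta)\bigr\}\,d\sigma(\zeta)$. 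The elementary identity $\int_{\partial B}\zeta_k (n_\zeta)_j\,d\sigma(\zeta)=|B|\,\delta_{jk}$ (divergence theorem again) gives $\int_{\partial B}\zeta\,(p\cdot n_\zeta)\,d\sigma(\zeta)=|B|\,p=\int_{\partial B}n_\zeta\,(p\cdot\zeta)\,d\sigma(\zeta)$, so the bracketed integral equals $\int_{\partial B}n_\zeta\bigl(p\cdot(\zeta+\Psi(\zeta))\bigr)d\sigma(\zeta)=Mp$ with $M$ as in \eqref{pol tensor}. This yields \eqref{simplified asymptotic expansion}.

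It remains to check that $M$ is symmetric and positive definite. Writing $M_{kj}=\int_{\partial B}(n_\zeta)_k\zeta_j\,d\sigma+\int_{\partial B}(n_\zeta)_k\Psi_j\,d\sigma$, the first summand is $|B|\,\delta_{kj}$, while for the second I would use $(n_\zeta)_k=-\partial\Psi_k/\partial n$ on $\partial B$ and apply Green's second identity to the harmonic functions $\Psi_k,\Psi_j$ in $(\mathbb{R}^d\setminus B)\cap B_R(0)$; the contribution on $\partial B_R(0)$ tends to zero as $R\to\infty$ by the decay bounds \eqref{asympotic behaviour} for $\Psi_i,\nabla\Psi_i$, leaving $\int_{\partial B}(n_\zeta)_k\Psi_j\,d\sigma=\int_{\partial B}(n_\zeta)_j\Psi_k\,d\sigma$, hence $M=M^T$. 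For positive definiteness, set $U_p:=p\cdot\Psi$, which solves \eqref{differential problem for U} with datum $-p\cdot n$; then
\begin{equation*}
	p\cdot Mp=\int_{\partial B}(p\cdot n_\zeta)(p\cdot\zeta)\,d\sigma(\zeta)+\int_{\partial B}(p\cdot n_\zeta)\,U_p\,d\sigma(\zeta)
	=|B|\,|p|^2-\int_{\partial B}\frac{\partial U_p}{\partial n}\,U_p\,d\sigma(\zeta),
\end{equation*}
and Green's first identity in $(\mathbb{R}^d\setminus B)\cap B_R(0)$, with the $\partial B_R(0)$ term again annihilated by \eqref{asympotic behaviour}, gives $-\int_{\partial B}(\partial U_p/\partial n)\,U_p\,d\sigma=\int_{\mathbb{R}^d\setminus B}|\nabla U_p|^2\,dx\ge 0$; therefore $p\cdot Mp\ge|B|\,|p|^2>0$ whenever $p\ne 0$.

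The only delicate point is the justification of the integrations by parts over the unbounded exterior domain, that is, that the spherical-cap contributions vanish in the limit $R\to\infty$; this is handled exactly as in the proof of Theorem \ref{theorem representation formula}, using the growth estimates \eqref{asympotic behaviour} (equivalently, the single-layer representation of $U_p$). Everything else is routine: linearity of the exterior Neumann problem, Proposition \ref{Proposition for trace}, and the divergence-theorem identity $\int_{\partial B}\zeta_j n_k\,d\sigma=|B|\,\delta_{jk}$.
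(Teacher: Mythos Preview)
Your proof is correct and follows essentially the same route as the paper: specialize Theorem \ref{theorem asymptotic expansion}, kill the $\varepsilon^{d-1}$ term by the divergence theorem, invoke Proposition \ref{Proposition for trace} to replace $(\tfrac12 I+K_B)^{-1}S_B\widehat{g}$ by $p\cdot\Psi$, and use $\int_{\partial B}\zeta_j n_k\,d\sigma=|B|\delta_{jk}$ to recast the remaining integral as $Mp$. The only cosmetic differences are that you verify explicitly that $\widehat{g}$ is $\varepsilon$-independent (which the paper leaves implicit) and that for the symmetry of $M$ you use Green's \emph{second} identity to swap $\int_{\partial B}n_k\Psi_j$ with $\int_{\partial B}n_j\Psi_k$, whereas the paper uses Green's \emph{first} identity to rewrite $\int_{\partial B}n_j\Psi_i$ as the manifestly symmetric $\int_{\mathbb{R}^d\setminus B}\nabla\Psi_i\cdot\nabla\Psi_j$; both arguments are equivalent and rely on the same decay control at infinity.
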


\begin{proof}
Let us set
\begin{equation*}
	\begin{aligned}
	J_1&:=\nabla\varGamma(x-z)\cdot\int_{\partial B} n_{\zeta}
		\left(\frac{1}{2}I+K_{B}\right)^{-1}S_{B}[-p\cdot n](\zeta)\,d\sigma(\zeta),\\
	J_2&:=\nabla\varGamma(x-z)\cdot\int_{\partial B}\zeta\,p\cdot n_{\zeta}\,d\sigma(\zeta).
	\end{aligned}
\end{equation*}
Then, expansion \eqref{asymptotic expansion} with $g=-p\cdot n$ gives
\begin{equation}\label{asymptotic expansion for pn}
	\begin{aligned}
	\tfrac{1}{2}u_{\varepsilon}(x)&=-\varepsilon^{d-1}\varGamma(x-z)\int_{\partial B}p\cdot n_{\zeta}\,d\sigma(\zeta)
		+J_1+J_2+O(\varepsilon^{d+1})\\
	&= J_1+J_2+O(\varepsilon^{d+1})
	\end{aligned}
\end{equation}  
since divergence Theorem guarantees that the first term in the expansion for $u_{\varepsilon}$ is null.

From the equation \eqref{differential problem for U}, with $g=-p\cdot n$, since the problem for $U$ is linear,
we can decompose $U$ as $U=\sum_i U_i$
where the functions $U_i$, for $i=1,\cdots,d$, solve 
\begin{equation*}
	\Delta U_i=0\quad \text{in}\ \mathbb{R}^d\setminus B,\quad
	\frac{\partial U_i}{\partial n}=-p_in_i\quad \text{on}\ \partial B,\quad
	U_i\longrightarrow 0\quad \text{as}\ |x|\rightarrow +\infty.
\end{equation*}
From the definition of the functions $\Psi_i$, we deduce $U=p\cdot \Psi$.
Using Proposition \ref{Proposition for trace}, the term $J_1$ can be rewritten as
\begin{equation*}
	J_1=\nabla\varGamma(x-z)\cdot\int_{\partial B}\left(\Psi(\zeta)\cdot p\right)n_\zeta\, d\sigma(\zeta)
		=\nabla\varGamma(x-z)\cdot \int_{\partial B}\left(n_\zeta \otimes \Psi(\zeta)\right) p\,d\sigma(\zeta).
\end{equation*} 
To deal with the term $J_2$, we preliminarly observe that 
\begin{equation*}
	\int_{\partial B}\left(\zeta\otimes n_\zeta\right) \,d\sigma(\zeta)=\int_{\partial B}\left(n_\zeta \otimes \zeta\right)\,d\sigma(\zeta).
\end{equation*}
Indeed, for $n_\zeta=(n_{\zeta,1},\dots n_{\zeta,d})$, for any $i,j\in\{1,\dots,d\}$, it follows
\begin{equation*}
	\begin{aligned}
	\int_{\partial B}\zeta_i\,n_{\zeta,j}\,d\sigma(\zeta)
	&=\int_{\partial B}n_\zeta \cdot  \zeta_i e_j\, d\sigma(\zeta)
		=\int_{B}\textrm{div}\left(\zeta_ie_j\right)d\zeta=\int_{B}e_j\cdot e_i\,d\zeta\\
	&=\int_{B}\nabla(\zeta_j)\cdot e_i\,d\zeta=\int_{B}\text{div}\left(\zeta_je_i \right)\,d\zeta
		=\int_{\partial B}n_\zeta \cdot\zeta_j e_i \,d\sigma(\zeta)\\
	&=\int_{\partial B}n_{\zeta,i}\,\zeta_j\,d\sigma(\zeta)
\end{aligned}
\end{equation*}
where $e_j$ is the $j$-th unit vector of $\mathbb{R}^d$.
Thus, 
we get
\begin{equation*}
	J_2=\nabla\varGamma(x-z)\cdot\int_{\partial B} (\zeta\otimes n_{\zeta})\,p \,d\sigma(\zeta)
		=\nabla\varGamma(x-z)\cdot \int_{\partial B}\left(n_\zeta \otimes \zeta\right)\,p\,d\sigma(\zeta).
\end{equation*}
Collecting the expressions for $J_1$ and $J_2$, we obtain formula \eqref{simplified asymptotic expansion}. 

Symmetry of the tensor $M$, defined in \eqref{pol tensor}, follows from
\begin{equation*}
	\begin{aligned}
	\int_{\partial B}\Psi_i(\zeta) n_{\zeta,j}\, d\sigma(\zeta)
		&=-\int_{\partial B}\Psi_i(\zeta) \frac{\partial \Psi_j}{\partial n}(\zeta)\, d\sigma(\zeta)\\
		&=\int_{\mathbb{R}^d\setminus B}\textrm{div}\left(\Psi_i(\zeta)\nabla \Psi_j(\zeta)\right)\, d\zeta
		=\int_{\mathbb{R}^d\setminus B}\nabla\Psi_i(\zeta)\cdot \nabla \Psi_j(\zeta)\, d\zeta
	\end{aligned}
\end{equation*}
where the last term is obviously symmetric.
Taking $\eta\in\mathbb{R}^d$, we consider
\begin{equation*}
\eta\cdot M\eta=\int_{\partial B}(n_{\varepsilon}\cdot\eta)(\zeta\cdot\eta)\, d\sigma(\zeta)+\int_{\partial B}(n_{\varepsilon}\cdot\eta)(\Psi(\zeta)\cdot \eta)\, d\sigma(\zeta)=I_1+I_2.
\end{equation*}
The positivity of the tensor follows from the divergence theorem and integration by parts, in fact
\begin{equation*}
I_1=\int_{B}\textrm{div}((\zeta\cdot\eta) \eta)\, d\zeta=\int_{B}\eta \cdot \nabla(\zeta\cdot\eta)\, d\zeta=\int_{B}|\eta|^2\, d\zeta=|\eta|^2|B|
\end{equation*}
In the same way, by the definition of the function $\Psi$
\begin{equation*}
\begin{aligned}
I_2=-\int_{\partial B}\frac{\partial}{\partial n}(\Psi(\zeta)\cdot\eta)(\Psi(\zeta)\cdot\eta)\, d\sigma(\eta)&=\int_{\mathbb{R}^d\setminus B}\textrm{div}\left((\Psi(\zeta)\cdot\eta)\nabla(\Psi(\zeta)\cdot\eta)\right)\, d\sigma(\zeta)\\
%&=\int_{\mathbb{R}^d\setminus B}\left(\nabla(\Psi(\zeta)\cdot\eta)\cdot\nabla(\Psi(\zeta)\cdot\eta)\right)\, d\zeta
&=\int_{\mathbb{R}^d\setminus B}\big|\nabla(\Psi(\zeta)\cdot\eta)\big|^2\, d\zeta
\end{aligned}
\end{equation*}
The sum of $I_1$ and $I_2$ gives the positivity.
\end{proof}

For specific forms of the cavity $C$, the auxiliary function $\Psi$ can be determined explicitly,
providing a corresponding explicit formula for the polarization tensor $M$.
The basic case is the one of a spherical cavity (see \cite{Friedman-Vogelius}).
If $B=\{x\in\mathbb{R}^d\,:\,|x|<1\}$, then a direct calculation shows that, for $i=1,2,3$,
there holds $\Psi_i(x)=x_i/((d-1)|x|^n)$, and thus
\begin{equation*}
	\Psi_i(\zeta)=\frac{1}{d-1}\,\zeta_i,\qquad \zeta\in\partial B.
\end{equation*}
As a consequence, the polarization tensor is a multiple of the identity and, precisely,
\begin{equation*}
	M=\frac{3}{2}|B| I=2\pi I.
\end{equation*}
Then, the asymptotic expansion \eqref{simplified asymptotic expansion} becomes
\begin{equation*}
	u_{\varepsilon}(x)=4\pi\varepsilon^d\nabla\varGamma(x-z)\cdot p+O(\varepsilon^{d+1}),
	\qquad x\in\mathbb{R}^{d-1}.
\end{equation*}
Explicit formulas can be provided also in the case of ellipsoidal cavities
(see \cite{Ammari1, Ammari-Kang, Ammari-Kang1}).
	
In general, for given shapes of the cavity $C$, such auxiliary function can be numerically
approximated and, thus, the first term in the expansion \eqref{simplified asymptotic expansion}
can be considered as known in practical cases.

\subsection*{Acknowledgements}
C.Mascia has been partially supported by the italian Project FIRB 2012
``Dispersive dynamics: Fourier Analysis and Variational Methods''.

\end{document}